\numberwithin{equation}{section}
\theoremstyle{plain}
\newtheorem{theorem}{Theorem}
\newtheorem{conjecture}[theorem]{Conjecture}
\newtheorem{corollary}[theorem]{Corollary}
\newtheorem{proposition}[theorem]{Proposition}
\newtheorem*{theorem*}{Theorem}
\newtheorem*{conjecture*}{Conjecture}
\theoremstyle{definition}
\newtheorem{remark}[theorem]{Remark}
\newtheorem*{definition}{Definition}
\newcommand{\CC}{{\mathbb{C}}}
\newcommand{\QQ}{{\mathbb{Q}}}
\newcommand{\RR}{{\mathbb{R}}}
\newcommand{\ZZ}{{\mathbb{Z}}}
\newcommand{\LL}{{\mathbb{L}}}
\newcommand{\NN}{{\mathbb{N}}}
\newcommand{\eps}{\varepsilon}
\def\H{{\mathcal H}}
\begin{document}
\title[Lattices for Landau--Ginzburg orbifolds]{Lattices for Landau--Ginzburg orbifolds}
\author{Wolfgang Ebeling and Atsushi Takahashi}
\address{Institut f\"ur Algebraische Geometrie, Leibniz Universit\"at Hannover, Postfach 6009, D-30060 Hannover, Germany}
\email{ebeling@math.uni-hannover.de}
\address{
Department of Mathematics, Graduate School of Science, Osaka University, 
Toyonaka Osaka, 560-0043, Japan}
\email{takahashi@math.sci.osaka-u.ac.jp}
\subjclass[2010]{32S25, 32S35, 14L30, 53D37}
\date{}

\begin{abstract} 
We consider a pair consisting of an invertible polynomial and a finite abelian group of its symmetries. Berglund, H\"ubsch, and Henningson proposed a duality between such pairs giving rise to mirror symmetry. We define an orbifoldized signature for such a pair using the orbifoldized elliptic genus. In the case of three variables and based on the homological mirror symmetry picture, we introduce two integral lattices, a transcendental and an algebraic one. We show that these lattices have the same rank and that the signature of the transcendental one is the orbifoldized signature. Finally, we give some evidence that these lattices are interchanged under the duality of pairs.
\end{abstract}
\maketitle
\section*{Introduction}
Let $X$ be a smooth compact K\"ahler manifold of dimension $n$. 
The Hodge numbers $h^{p,q}(X):=\dim_\CC H^q(X,\Omega_X^p)$, $p,q \in \ZZ$, are some  of the most important numerical invariants 
of $X$. 
They satisfy
\[
h^{p,q}(X)=h^{q,p}(X),\quad p,q\in\ZZ,
\]
and the Serre duality
\[
h^{p,q}(X)=h^{n-p,n-q}(X),\quad p,q\in\ZZ.
\]
Let $n$ be even. Then the intersection form on the free part of the middle homology group  $H^n(X, \ZZ)/{\rm Tors}(H^n(X, \ZZ))$ is a non-degenerate symmetric bilinear form. Let $\mu_+$ and $\mu_-$ be the number of positive and negative eigenvalues of this form respectively. Then one has the following well-known formulas for these numbers:
\[
\mu_+ = \sum_{q \, {\rm even} \atop p+q \equiv 0\, (2)} h^{p,q}(X), \quad \mu_+ = \sum_{q \,{\rm odd} \atop p+q \equiv 0\, (2)} h^{p,q}(X).
\]

There are analogues of these formulas for the Milnor fibre of a weighted homogeneous polynomial $f(x_1,\dots, x_n)$, $n$ odd, with an isolated singularity at the origin.
There, the analogue of the multi-set $\{q*h^{p,q}(X)~|~p,q\in \ZZ,\ h^{p,q}(X)\ne 0 \}$, where by $u*v$ we denote $v$ copies of the integer $u$, above will be the  {\em set of exponents} of $f(x_1,\dots, x_n)$,  
which is a set of rational numbers and is also one of the most important numerical invariants defined by the mixed Hodge structure 
associated to $f(x_1,\dots, x_n)$. 
Let $f(x_1,\dots, x_n)$ be a non-degenerate weighted homogeneous polynomial, namely, a polynomial with an isolated singularity at the origin with the property that 
there are positive rational numbers $w_i$, $i=1,\dots ,n$, such that
 \[ f(\lambda^{w_1}x_1,\dots ,\lambda^{w_n}x_n)=\lambda f(x_1,\dots,x_n), \quad \lambda\in\CC\backslash\{0\}. \]
Let $\{ z^\beta \, | \, \beta \in I \subset \NN^n \}$ be a set of monomials in $\CC[z_1, \ldots, z_n]$ which represent a basis of the finite dimensional vector space $\CC[z_1, \ldots, z_n]/(\frac{\partial f}{\partial z_1}, \ldots , \frac{\partial f}{\partial z_n})$. For $\beta \in I$, let $l(\beta)=\sum_{i=1}^n (\beta_i +1) w_i$.
Let $V_f=f^{-1}(1)$ be the Milnor fibre of $f$.

One has the following theorem due to J.~H.~M.~Steenbrink.

\begin{theorem}[cf.\ \cite{st:2}] \label{thm:st}
Let $\mu_+$, $\mu_-$, and $\mu_0$ be the number of positive, negative and zero eigenvalues of the intersection form on $H_{n-1}(V_f, \RR)$ respectively. Then
\begin{eqnarray*}
\mu_+ & = & \sharp \{ \beta \in I \, | \, l(\beta) \not\in \ZZ \mbox{ and } [l(\beta)] \mbox{ even} \}, \\
\mu_-  &  = & \sharp \{ \beta \in I \, | \, l(\beta) \not\in \ZZ \mbox{ and } [l(\beta)] \mbox{ odd} \}, \\
\mu_0 & = & \sharp \{ \beta \in I \, | \, l(\beta) \in \ZZ \}.
\end{eqnarray*}
\end{theorem}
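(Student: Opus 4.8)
\emph{Proof proposal} (a plan, not a full proof). The plan is to combine Steenbrink's explicit description of the mixed Hodge structure on the vanishing cohomology of a weighted homogeneous polynomial with the Hodge--Riemann bilinear relations. Throughout I identify $H_{n-1}(V_f)\cong\widetilde H^{n-1}(V_f)$ by Poincar\'e--Lefschetz duality for the manifold with boundary $V_f$, under which the intersection form $S$ becomes the cup--product form coming from $\mathrm{Im}\bigl(H^{n-1}(V_f,\partial V_f)\to H^{n-1}(V_f)\bigr)$; since $n$ is odd, $S$ is symmetric. \emph{Step 1 (structure).} I recall that for a weighted homogeneous $f$ the monodromy $h$ is of finite order, hence semisimple, so $\widetilde H^{n-1}(V_f,\CC)=\bigoplus_\lambda H_\lambda$ decomposes as a direct sum of pure Hodge structures on the eigenspaces, with $H_1$ of weight $n$ and $H_\lambda$ ($\lambda\ne 1$) of weight $n-1$; moreover the monomials $z^\beta$, $\beta\in I$, give a basis $\{\omega_\beta\}$ adapted both to this decomposition and to the Hodge filtration: $\omega_\beta\in H_{\lambda(\beta)}$ with $\lambda(\beta)=\exp(\pm2\pi\sqrt{-1}\,l(\beta))$, and, whenever $l(\beta)\notin\ZZ$, $\omega_\beta$ spans a line in $H^{p,q}_{\lambda(\beta)}$ with $q=[l(\beta)]$ and $p=n-1-[l(\beta)]$. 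In particular $\dim H^{p,q}_\lambda=\#\{\beta\mid\lambda(\beta)=\lambda,\ [l(\beta)]=q\}$ for $\lambda\neq 1$.

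\emph{Step 2 ($\mu_0$).} By the general theory of the Milnor fibration of an isolated singularity, the radical of $S$ is exactly the image of $H_{n-1}(\partial V_f)$, which equals the monodromy--invariant part $(H_1)_\RR=\ker(h-\mathrm{id})$: an invariant class is carried by the boundary and can be pushed into a collar, so $S$ kills it, while $S$ is nondegenerate on a complement. Since $\lambda(\beta)=1$ iff $l(\beta)\in\ZZ$, this gives $\mu_0=\dim H_1=\#\{\beta\mid l(\beta)\in\ZZ\}$, and reduces the problem to the signature of $S$ on $H_{\neq 1}=\bigoplus_{\lambda\neq 1}H_\lambda$.

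\emph{Step 3 (signature via Hodge--Riemann).} The restriction of $S$ to the real points of $H_{\neq 1}$ is, up to a universal sign $\zeta_n\in\{\pm1\}$ depending only on $n\bmod 4$, a polarization of the pure weight $n-1$ Hodge structure $H_{\neq1}$ (compatibility of the intersection form of an isolated hypersurface singularity with its mixed Hodge structure; cf.\ Scherk--Steenbrink, M.~Saito, Hertling). As $S(H_\lambda,H_\mu)=0$ unless $\lambda\mu=1$, it suffices to compute, for each $\lambda\ne 1$, the signature of $S$ on the real subspace underlying $H_\lambda\oplus H_{\bar\lambda}$. By the Hodge--Riemann relations the Hermitian form $v\mapsto S(v,\bar v)$ on $H^{p,q}_\lambda$ has constant sign $\zeta_n(-1)^{(n-1)/2}(-1)^{[l(\beta)]}$ (note $p-q$ is even), and complex conjugation identifies $H^{p,q}_\lambda$ with $H^{q,p}_{\bar\lambda}$, so $\lambda$ and $\bar\lambda$ contribute equally (the parity of $[l(\beta)]$ is preserved because $n-1$ is even); for $\lambda=-1$ one runs the same computation on the already real space $H_{-1}$. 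Summing over all $\lambda\ne 1$ gives
\[
\mu_\pm=\#\bigl\{\beta\in I\ \big|\ l(\beta)\notin\ZZ,\ \pm\,\zeta_n(-1)^{(n-1)/2}(-1)^{[l(\beta)]}>0\bigr\}.
\]
Finally $\zeta_n(-1)^{(n-1)/2}=1$ for every odd $n$: this sign depends only on $n\bmod 4$, and it is $1$ for $n=1$ (take $f=x^k$: the Milnor fibre is $k$ reduced points, $S$ is positive definite, and all $l(\beta)\in(0,1)$ so $[l(\beta)]=0$) and for $n=3$ (take $f=x^3+y^3+z^3$: the nondegenerate part of $S$ on $\widetilde H^2$ is the negative definite $E_6$ lattice, and every non-integral $l(\beta)$ equals $\tfrac43$ or $\tfrac53$, so $[l(\beta)]=1$). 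This yields the three asserted formulas.

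The main obstacle is Step 3, specifically the sign bookkeeping: one needs (a) that the intersection form itself, with a definite overall sign, polarizes the $\lambda\ne1$ part of the Hodge structure; (b) the exact dictionary between $l(\beta)$, the eigenvalue $\lambda(\beta)$, and the Hodge bidegree of $\omega_\beta$; and (c) the combinatorics that turns the Hodge--Riemann sign on each $H^{p,q}_\lambda$ into a count of exponents $l(\beta)$ with prescribed integer part, including the slightly exceptional real eigenvalue $\lambda=-1$. None of this is deep once Steenbrink's description of the Hodge structure is available, but keeping all conventions (orientation of the monodromy, the Weil operator, Poincar\'e versus Lefschetz duality, cup product versus intersection pairing) mutually consistent is the real work; calibrating the residual universal sign $\zeta_n$ on the two explicit examples above sidesteps a direct, error-prone normalization computation. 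As a fallback I would keep in reserve an equivariant-signature approach: express $\mu_+-\mu_-$ as an Atiyah--Patodi--Singer / $G$-signature invariant for the finite cyclic monodromy action, whose boundary correction term is a lattice-point count in the weights $w_i$, and match that count with the right-hand sides above.
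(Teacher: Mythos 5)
The paper does not actually prove this statement: Theorem~\ref{thm:st} is imported verbatim from Steenbrink's paper \cite{st:2} (and only reformulated later as Theorem~\ref{thm:st:ourform}), so there is no in-paper argument to compare against. Your plan is, in substance, a reconstruction of Steenbrink's own proof: eigenspace decomposition under the finite-order monodromy, identification of the radical with the invariant part via the Wang sequence, and the Hodge--Riemann relations on the weight-$(n-1)$ part, with the Hodge bidegree of $\omega_\beta$ read off from $[l(\beta)]$. The outline is correct, and you have correctly isolated the one genuinely nontrivial input, namely that the intersection form (with a definite overall sign) polarizes the pure Hodge structure on $\bigoplus_{\lambda\ne1}H_\lambda$; in \cite{st:2} this is established by compactifying $V_f$ to a quasi-smooth hypersurface in the weighted projective space $\PP(w_1,\dots,w_n,1)$ and comparing with the polarized primitive cohomology there, so a complete write-up would need that compactification step rather than an appeal to the later general theory of Scherk--Steenbrink or M.~Saito. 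Two small points of care: your $n=1$ calibration should be phrased on the reduced homology $\widetilde H_0(V_f)$ (of rank $k-1=\sharp I$, where the restricted form is still positive definite), and your $n=3$ calibration with $x^3+y^3+z^3$ indeed gives $(\mu_+,\mu_0,\mu_-)=(0,2,6)$, matching $[l(\beta)]=1$ for all six non-integral exponents, so the sign normalization comes out right. Your fallback via the $G$-signature theorem is a genuinely different (and also classical, Hirzebruch--Zagier style) route, but it is not the one taken in \cite{st:2}.
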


A special case of a non-degenerate weighted homogeneous polynomial is a so-called {\em invertible polynomial}. This is a polynomial which has as many terms as there are variables. P.~Berglund and T.~H\"ubsch \cite{BHu} considered a duality between such polynomials which is related to mirror symmetry: to an invertible polynomial $f$ is associated its transpose $\widetilde{f}$.  Berglund and M.~Henningson \cite{BHe} generalized this to an orbifold setting. A {\em Landau--Ginzburg orbifold} is a pair $(f,G)$ where $f$ is an invertible polynomial and $G$ a (finite) abelian group of symmetries of $f$. To such a pair they associate a dual pair $(\widetilde{f}, \widetilde{G})$. One can assign a set of exponents to such a pair, see \cite{ET1, ET2, EGT}. The generating function of this set of exponents is the orbifold E-function. In joint work with S.~M.~Gusein-Zade \cite{EGT}, it was shown that there is a symmetry property of the orbifold E-functions of Berglund--H\"ubsch--Henningson dual pairs. 

Here we derive certain properties from this symmetry property. Namely, we first show that this yields the symmetry of the orbifoldized elliptic genera as predicted by Berglund and Henningson \cite{BHe} and T.~Kawai and S.-K.~Yang \cite{KY}. We show how to obtain the signature of the Milnor lattice from the unorbifoldized elliptic genus. This gives a definition of the signature for the orbifold case. This leads to the question of the realization of a lattice with this signature in the orbifold case. In the case $n=3$, we offer two lattices, one for the A-model in the case $G \subset {\rm SL}(3;\CC) \cap G_f$, the other one for the dual B-model in the case $G_0 \subset G$, where $G_0$ is the subgroup of the group $G_f$ of all diagonal symmetries generated by the exponential grading operator. In both cases, we show that these lattices have the correct rank. Moreover, in the case $G \subset {\rm SL}(3;\CC)$, we show that it has the correct signature. 
We give some evidence that these lattices are the correct generalizations of the Milnor lattice.

\begin{sloppypar}

{\bf Acknowledgements}.\  
This work has been partially supported by DFG.
The second named author is also supported 
by JSPS KAKENHI Grant Number 16H06337. 
\end{sloppypar}

\section{Landau--Ginzburg orbifolds}
In this section we collect some definitions and facts about Landau---Ginzburg orbifolds.

A non-degenerate weighted homogeneous polynomial $f(x_1,\ldots ,x_n)$ is called {\em invertible} if 
the following conditions are satisfied:
\begin{enumerate}
\item the number of variables ($=n$) coincides with the number of monomials 
in the polynomial $f(x_1,\ldots x_n)$, 
namely, 
\[
f(x_1,\ldots ,x_n)=\sum_{i=1}^n c_i\prod_{j=1}^nx_j^{E_{ij}}
\]
for some coefficients $c_i\in\CC^\ast$ and non-negative integers 
$E_{ij}$ for $i,j=1,\ldots, n$,
\item a system of weights $(w_1,\ldots ,w_n;1)$ can be uniquely determined by 
the polynomial $f(x_1,\ldots ,x_n)$, 
namely, the matrix $E:=(E_{ij})$ is invertible over $\QQ$.
\end{enumerate}

Let $f(x_1,\ldots ,x_n)=\sum_{i=1}^n c_i\prod_{j=1}^nx_j^{E_{ij}}$ be an invertible polynomial.
The {\em group of diagonal symmetries} $G_f$ of $f$ is
the finite abelian group defined by 
\[
G_f=\left\{(\lambda_1,\ldots ,\lambda_n)\in (\CC^\ast)^n\, \left| \,
f(\lambda_1 x_1, \ldots, \lambda_n x_n) = f(x_1,\ldots ,x_n) \right\} \right..
\]
In other words
\[
G_f=\left\{(\lambda_1,\ldots ,\lambda_n)\in (\CC^\ast)^n\, \left| \,
\prod_{j=1}^n \lambda_j ^{E_{1j}}=\ldots =\prod_{j=1}^n\lambda_j^{E_{nj}}=1 \right\} \right..
\]

Note that $G_f$ always contains the exponential grading operator 
\[
g_0:=({\bf e}[w_1], \ldots , {\bf e}[w_n]),
\]
where ${\bf e}[ - ] := e^{2 \pi \sqrt{-1} \cdot  -}$. Denote by $G_0$ the subgroup of $G_f$ generated by $g_0$.

Let $f(x_1, \ldots , x_n)$ be an invertible polynomial and $G$ a subgroup of $G_f$. The pair $(f,G)$ is called a {\em Landau--Ginzburg orbifold}.
For $g \in G$,
we denote by ${\rm Fix}\, g :=\{x\in\CC^n~|~g\cdot x=x \}$ the fixed locus of $g$, by
$n_g: = \dim {\rm Fix}\, g$ its dimension and by $f^g:=f|_{{\rm Fix}\, g}$ the restriction of $f$
to the fixed locus of $g$. Note that the function $f^g$ has an isolated singularity at the origin
\cite[Proposition~5]{ET2}.

Y.~Ito and M.~Reid \cite{IR} introduced the notion of the age of an element of a finite group as follows:
Let $g \in G$ be an element and $r$ be the order of $g$. Then $g$ has a unique expression of the following form  
\[
g=\left({\bf e}\left[\frac{p_1}{r}\right], \ldots, {\bf e}\left[\frac{p_n}{r}\right]\right)
\quad \mbox{with } 0 \leq p_i < r.
\]
(Such an element $g$ is often simply denoted by $g=\frac{1}{r}(p_1, \ldots, p_n)$.)
The {\em age} of $g$ is defined as \[ {\rm age}(g) := \frac{1}{r}\sum_{i=1}^n p_i. \] 

Berglund and H\"{u}bsch \cite{BHu} defined the dual polynomial $\widetilde{f}$ of $f$ as follows.
Let $f(x_1,\ldots ,x_n)$ be an invertible polynomial. 
The {\em Berglund--H\"{u}bsch transpose} $\widetilde{f}(x_1,\ldots ,x_n)$ 
of $f(x_1,\ldots ,x_n)$ is defined by
\begin{equation}
\widetilde{f}(x_1,\ldots ,x_n):=\sum_{i=1}^n c_i\prod_{j=1}^nx_j^{E_{ji}}.
\end{equation}

Berglund and Henningson \cite{BHe} defined the {\em dual group} $\widetilde{G}$ of $G$ as follows:
\begin{equation}
\widetilde{G} := {\rm Hom}(G_f/G, \CC^\ast).
\end{equation}
One  can see that $G_0\subset G$ if and only if
$\widetilde{G}\subset {\rm SL}(n;\CC)\cap G_{\widetilde{f}}$.

\section{Orbifoldized elliptic genus}
Let $f(x_1,\dots, x_n)$ be a non-degenerate weighted homogeneous polynomial with weights $w_1, \ldots , w_n$ and degree $d$.
Let $\tau$ be an element of the upper half plane, $z \in \CC$, $q=e^{2 \pi \sqrt{-1} \tau}$, and $y=e^{2 \pi \sqrt{-1} z}$. The {\em elliptic genus} $Z(\tau, z)$ is defined as follows (cf.\ the papers of Kawai, Y.~Yamada and Yang \cite{KYY} and Kawai and Yang \cite{KY}):
\[ 
Z(\tau, z) := \prod_{i=1}^n Z_{w_i}(\tau, z),
\]
where
\begin{eqnarray*}
Z_{w_i}(\tau, z) & = & y^{-(1-2w_i)/2} \frac{(1-y^{1-w_i})}{(1-y^{w_i})} \prod_{l=1}^\infty \frac{(1-q^ly^{1-w_i})}{(1-q^ly^{w-i})} \frac{(1-q^ly^{-(1-w_i)})}{(1-q^ly^{-w_i})} \\
 & = & \frac{\vartheta_1(\tau, (1-w_i)z)}{\vartheta_1(\tau, w_i z)}
\end{eqnarray*}
and $\vartheta_1(\tau, z)$  is one of the Jacobi theta functions, namely
\[
\vartheta_1(\tau,z) = \sqrt{-1} q^{1/8}y^{-1/2} \prod_{l=1}^\infty (1-q^l)(1-q^{l-1}y)(1-q^ly^{-1}).
\]

Let $G \subset G_f$. Then the {\em orbifoldized elliptic genus} is defined as follows (cf.\ \cite{KYY, KY}). Let $a=(a_1, \ldots , a_n)$ stand for the element 
$( {\bf e}[a_1 w_1], \ldots , {\bf e}[a_n w_n])$ of $G$. Then
\[
Z(f,G)(\tau,z) := \frac{1}{|G|} \sum_{a, b \in G} \prod_{i=1}^n (-1)^{a_i + b_i + a_i b_i} Z_{a b}(f,G)(\tau, z),
\]
where
\begin{eqnarray*}
\lefteqn{Z_{a b}(f,G)(\tau, z)} \\
 & = & \prod_{i=1}^n {\bf e}\left[ \frac{1-2w_i}{2} a_i b_i \right] {\bf e} \left[ \frac{1-2w_i}{2}(a_i^2\tau+2a_iz) \right] 
\frac{\vartheta_1(\tau, (1-w_i)(z+a_i\tau+b_i))}{\vartheta_1(\tau, w_i(z+a_i\tau +b_i))}.
\end{eqnarray*}

The elliptic genus specializes to the {\em $\chi_y$-genus}. Let 
\[
\hat{c} := \sum_{i=1}^n (1-2w_i).
\]
Then 
\[
\chi(f,G)(y) = y^{\hat{c}/2} Z(f,G)(\sqrt{-1} \infty,z),
\]
where $Z(f,G)(\sqrt{-1} \infty,z) = \lim_{\tau \to \sqrt{-1}\infty} Z(f,G)(\tau,z)$.
One obtains the following formula (cf.\ \cite{ET2}):
\[ \chi(f,G)(y) = (-1)^n \sum_{a \in G}\left( y^{{\rm age}(a) - \frac{n-n_a}{2}}
\cdot \frac{1}{|G|} \sum_{b \in G}\prod_{{\bf e}[a_iw_i] = 1}
\frac{y^{\frac{1}{2}}-{\bf e}[b_iw_i]y^{w_i-\frac{1}{2}}}
{1-{\bf e}[b_iw_i]y^{w_i}} \right) .
\]

Now let $f$ be an invertible polynomial and let $G$ be a subgroup of $G_f$ with $G \subset {\rm SL}(3;\CC) \cap G_f$ or $G_0 \subset G$. We consider the Landau--Ginzburg orbifold $(f,G)$ and the dual pair $(\widetilde{f},\widetilde{G})$. Let $E(f,G)(t, \bar{t})$ be the orbifold E-function defined in \cite{ET1, ET2, EGT}. We recall the definition of this function.
Steenbrink \cite{st:1} constructed a canonical mixed Hodge structure on the vanishing cohomology $H^{n-1}(V_f,\CC)$ of $f$
with an automorphism $c$ given by the Milnor monodromy. 
In our situation, the automorphism $c$ coincides with the one induced by the action of $g_0\in G_f$ on $\CC^n$, 
which is semi-simple.
For $\lambda\in \CC$, let  
\begin{equation*}
H^{n-1}(V_f;\CC)_\lambda:={\rm Ker}(c-\lambda\cdot {\rm id}:H^{n-1}(V_f;\CC)\longrightarrow 
H^{n-1}(V_f;\CC)).
\end{equation*}
Denote by $F^\bullet$ the Hodge filtration of the mixed Hodge structure.

We can naturally associate a $\QQ\times \QQ$-graded vector space to the mixed Hodge structure 
with an automorphism. 
Define the $\QQ\times \QQ$-graded vector space $\H_f:=\displaystyle\bigoplus_{p,q\in\QQ}\H^{p,q}_f$ as
\begin{enumerate}
\item If $p+q\ne n$, then  $\H^{p,q}_f:=0$.
\item If $p+q=n$ and $p\in\ZZ$, then  
\[
\H^{p,q}_f:={\rm Gr}^{p}_{F^\bullet}H^{n-1}(V_f,\CC)_1.
\]
\item If $p+q=n$ and $p\notin\ZZ$, then  
\[
\H^{p,q}_f:={\rm Gr}^{[p]}_{F^\bullet}H^{n-1}(V_f,\CC)_{e^{2\pi\sqrt{-1} p}},
\]
where $[p]$ is the largest integer less than $p$.
\end{enumerate}
Note that $\H_f \cong H^{n-1}(V_f,\QQ) \otimes_{\QQ} \CC$. This a $\frac{1}{d} \ZZ$-indexed {\em pure} Hodge structure of weight $n$ in the sense of T.~Yasuda \cite[Sect.~3.8]{Ya}.
Define the bi-graded $\CC$-vector space $\H_{f,G}$ as 
\begin{equation*}
\H_{f,G}:=\bigoplus_{g\in G}(\H_{f^g})^G(-{\rm age}(g)).
\end{equation*}
This is a $\frac{1}{d} \ZZ$-indexed {\em mixed} Hodge structure in the sense of \cite{Ya}.

\begin{definition}
The {\em Hodge numbers} for the pair $(f,G)$ are  
\[
h^{p,q}(f,G):= \dim_\CC  \H^{p,q}_{f,G},\quad p,q\in\QQ.
\]
\end{definition}
\begin{definition}
The rational number $q$ with $\H^{p,q}_{f,G}\ne 0$ is called an {\em exponent} of the pair $(f,G)$. 
The {\em set of exponents} of the pair $(f,G)$ is the multi-set of exponents 
\[
I(f,G):=\left\{q*h^{p,q}(f,G)~|~p,q\in\QQ,\ h^{p,q}(f,G)\ne 0 \right\},
\]
where by $u*v$ we denote $v$ copies of the rational number $u$.  
\end{definition}
The orbifold E-function $E(f,G)(t, \bar{t})$ is the generating function of these exponents. For a precise definition see \cite{EGT}.
We have
\[
\chi(f,G)(y)=E(f,G)(1, y)
\]
(cf. \cite{ET2}). Therefore \cite[Theorem 9]{EGT} gives
\begin{theorem} \label{thm:chiy}
Let $f(x_1, \ldots , x_n)$ be an invertible polynomial and $G$ a subgroup of $G_f$. Then 
\begin{equation*} 
\chi(f,G)(y) = (-1)^n \chi(\widetilde{f},\widetilde{G})(y).
\end{equation*}
\end{theorem}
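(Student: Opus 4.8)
The plan is to deduce the identity from the corresponding symmetry of the full (bivariate) orbifold E-functions of Berglund--H\"ubsch--Henningson dual pairs, established in \cite{EGT}, by specializing one of the two variables; the step carried out here is then essentially bookkeeping. The first, purely formal, observation is that by \cite{ET2} the $\chi_y$-genus is the restriction $\chi(f,G)(y)=E(f,G)(1,y)$ of the orbifold E-function $E(f,G)(t,\bar t)$, which is the signed generating function of the Hodge numbers $h^{p,q}(f,G)=\dim_\CC\H^{p,q}_{f,G}$ of the mixed Hodge structure $\H_{f,G}=\bigoplus_{g\in G}(\H_{f^g})^G(-\mathrm{age}(g))$ recalled above. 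Hence it is enough to know how $E(f,G)$ and $E(\widetilde f,\widetilde G)$ are related and to check that the relation is compatible with the substitution $t=1$.

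The second step invokes \cite[Theorem 9]{EGT}: for an invertible polynomial $f$ and any subgroup $G\subseteq G_f$, the orbifold E-functions of the dual pairs $(f,G)$ and $(\widetilde f,\widetilde G)$ coincide up to the overall sign $(-1)^n$, possibly after a change of the pair of variables $(t,\bar t)$ which fixes the hyperplane $t=1$ and restricts there to the identity on $\bar t$. (Note that $\widetilde G=\mathrm{Hom}(G_f/G,\CC^\ast)$ is always a subgroup of $G_{\widetilde f}$, so $(\widetilde f,\widetilde G)$ is again a Landau--Ginzburg orbifold; in the special cases $G\subseteq\mathrm{SL}(n;\CC)\cap G_f$ and $G_0\subseteq G$ one moreover uses the equivalence $G_0\subseteq G\Leftrightarrow\widetilde G\subseteq\mathrm{SL}(n;\CC)\cap G_{\widetilde f}$ recorded in Section~1.) Substituting $t=1$ then collapses the E-function symmetry to $E(f,G)(1,\bar t)=(-1)^nE(\widetilde f,\widetilde G)(1,\bar t)$, and feeding in $\chi(f,G)(y)=E(f,G)(1,y)$ for both pairs gives precisely $\chi(f,G)(y)=(-1)^n\chi(\widetilde f,\widetilde G)(y)$.

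Thus the only thing to be done in the present section is the specialization, and I expect no real obstacle there; the genuine mathematical content sits in \cite[Theorem 9]{EGT}, whose proof matches the double sum over $G\times G$ defining $E(f,G)$ with the double sum over $\widetilde G\times\widetilde G$ defining $E(\widetilde f,\widetilde G)$ through a delicate combinatorial identity. For a self-contained alternative one could instead attack the statement directly from the closed formula for $\chi(f,G)(y)$ displayed above: this is the one-variable degeneration of the same identity, and it should reduce to the Pontryagin-type duality between the characters $b\mapsto({\bf e}[b_1w_1],\ldots,{\bf e}[b_nw_n])$ of $G$ and those of $\widetilde G=\mathrm{Hom}(G_f/G,\CC^\ast)$, combined with a comparison of the weight systems of $f$ and of $\widetilde f$ and of the age and fixed-locus data of dual group elements. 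Routing through the bivariate E-function is cleaner, so that is the route I would take.
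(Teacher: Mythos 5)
Your proposal is correct and follows exactly the paper's route: the paper proves the theorem in one line by noting $\chi(f,G)(y)=E(f,G)(1,y)$ and then citing the E-function symmetry of \cite[Theorem 9]{EGT}, whose variable change indeed fixes the locus $t=1$ so that the specialization goes through as you describe. The additional remarks about a direct character-theoretic attack are not needed and are not in the paper.
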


\begin{corollary} \label{cor:ellgen}
Let $f(x_1, \ldots , x_n)$ be an invertible polynomial and $G$ a subgroup of $G_f$. Then 
\begin{equation*} 
Z(f,G)(\tau,z) = (-1)^n Z(\widetilde{f},\widetilde{G})(\tau,z).
\end{equation*}
\end{corollary}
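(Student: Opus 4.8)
The plan is to derive the identity from Theorem~\ref{thm:chiy} together with the modularity of the orbifoldized elliptic genus, along the lines anticipated by Berglund--Henningson \cite{BHe} and Kawai--Yang \cite{KY}. The key structural input is that $Z(f,G)(\tau,z)$ is not an arbitrary function but, by the analysis of \cite{KYY, KY}, a weak Jacobi form of weight $0$ and index $\hat c/2$ for $SL(2,\ZZ)$ (or, in general, for a suitable congruence subgroup with a multiplier system), and that the space of such forms is finite dimensional. The central charge $\hat c = n - 2\sum_{i=1}^n w_i$ is invariant under Berglund--H\"ubsch--Henningson duality, since $\sum_i w_i$ equals the sum of all entries of the matrix $E^{-1}$ and this is unchanged when $E$ is replaced by $E^{\mathrm T}$. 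Hence $Z(\widetilde f,\widetilde G)(\tau,z)$ is a weak Jacobi form of the \emph{same} weight and index for the \emph{same} group, and so is the difference
\[
D(\tau,z):=Z(f,G)(\tau,z)-(-1)^n Z(\widetilde f,\widetilde G)(\tau,z).
\]

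First I would note that $D$ vanishes as $\tau\to\sqrt{-1}\infty$: after multiplication by $y^{\hat c/2}$ this limit is the $\chi_y$-genus, and $\chi(f,G)(y)=(-1)^n\chi(\widetilde f,\widetilde G)(y)$ by Theorem~\ref{thm:chiy}. The essential point is then to upgrade ``vanishing at the cusp $\sqrt{-1}\infty$'' to ``identically zero''. For this I would use the theta decomposition with respect to the elliptic variable: since $D$ has weight $0$ and index $\hat c/2$, one can write $D(\tau,z)=\sum_{\mu} h_\mu(\tau)\,\vartheta_\mu(\tau,z)$ with $\vartheta_\mu$ the standard theta functions of index $\hat c/2$ and $(h_\mu)_\mu$ a weakly holomorphic vector-valued modular form of weight $-\tfrac12$. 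A form of negative weight is determined by its principal parts at the cusps; the principal part at $\sqrt{-1}\infty$ is exactly the data encoded by the $\chi_y$-genus, hence vanishes by the previous step. The remaining cusps are controlled because the value of $Z(f,G)$ at another cusp is again, up to the standard automorphy factor, a $\chi_y$-type generating function attached to a modular twist of the $(a,b)$-sectors of $(f,G)$, which matches the corresponding twist of $(\widetilde f,\widetilde G)$ by the same mechanism (indeed, when $Z(f,G)$ is genuinely $SL(2,\ZZ)$-modular up to a character there is effectively only one cusp). A weakly holomorphic modular form of weight $-\tfrac12$ with trivial principal part at every cusp vanishes identically, so $h_\mu\equiv 0$ for all $\mu$ and therefore $D\equiv 0$.

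The main obstacle is exactly this control of all the cusps, equivalently the assertion that every polar Fourier coefficient of the weak Jacobi form $Z(f,G)$ is already visible in the $\chi_y$-genus. Making this precise requires the exact modular transformation law of $Z(f,G)(\tau,z)$ --- the relevant group, the multiplier system, and the way the $S$- and $T$-transformations permute the $(a,b)$-sectors in the defining sum --- together with the verification that this law is respected by the duality; this is in essence the modularity analysis of \cite{KY}, so in practice the work reduces to checking that their arguments go through for the pairs $(f,G)$ and $(\widetilde f,\widetilde G)$ considered here, after which Theorem~\ref{thm:chiy} supplies the single ingredient that had been missing. A more pedestrian alternative would be a direct term-by-term comparison of the Kawai--Yang product formulas using the combinatorics of $E$, $E^{\mathrm T}$, $G$ and $\widetilde G$, but this looks considerably more painful and less illuminating.
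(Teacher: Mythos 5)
Your overall strategy (subtract, observe the difference is a weak Jacobi form of weight $0$, and kill it by showing its singular part vanishes) is a legitimate alternative in principle, but as written it has a genuine gap, and in fact two. First, the claim that ``the principal part at $\sqrt{-1}\infty$ is exactly the data encoded by the $\chi_y$-genus'' is not correct in the generality needed. The $\chi_y$-genus only records the $q^0$ Fourier coefficients $c(0,r)$, whereas the principal parts of the theta components $h_\mu$ of a weak Jacobi form of index $m$ consist of all polar coefficients $c(n,r)$ with $r^2>4nm$, which include terms with $n\ge 1$ as soon as $m\ge 4$. Here the effective index is large: the elliptic transformation law of the orbifoldized genus holds only for $z\mapsto z+\lambda\tau+\mu$ with $\lambda,\mu\in|G_f|\ZZ$ (this is exactly \cite[(4.10)]{KYY}, quoted in the paper), so after rescaling the theta decomposition is taken at index of order $\hat c\,|G_f|^2/2$, and the polar coefficients with $n\ge 1$ are invisible in $\lim_{\tau\to\sqrt{-1}\infty}$. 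So even at the one cusp you do treat, Theorem~\ref{thm:chiy} does not supply the full principal part. Second, you explicitly defer the control of the remaining cusps to ``checking that the arguments of \cite{KY} go through''; but matching the expansions of $Z(f,G)$ and $(-1)^nZ(\widetilde f,\widetilde G)$ at the other cusps amounts to a twisted-sector analogue of Theorem~\ref{thm:chiy} at each cusp, which is essentially the content to be proved, not a routine verification. As it stands the argument establishes only that $D$ has the same $q^0$ term as $0$, which does not force $D\equiv 0$.

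The paper avoids both problems by a small but decisive change: it takes the \emph{ratio} $\phi=(-1)^nZ(f,G)/Z(\widetilde f,\widetilde G)$ rather than the difference. Since numerator and denominator are Jacobi forms of the same weight and index for the same group (the point you correctly make about $\hat c$ being duality-invariant), all automorphy factors cancel, so $\phi$ is doubly periodic in $z$ and invariant under $\tau\mapsto\tau+1$ and $(\tau,z)\mapsto(-1/\tau,-z/\tau)$ with no multiplier; by the lemma of Di Francesco--Yankielowicz \cite{DY} such a function is constant. A constant is determined by a single limit, so the one piece of information you do have --- the equality of $\chi_y$-genera from Theorem~\ref{thm:chiy}, giving $\lim_{\tau\to\sqrt{-1}\infty}\phi=1$ --- already finishes the proof. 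If you want to salvage your difference-and-theta-decomposition route, you would need to compute all polar coefficients of both elliptic genera (not just the $q^0$ layer) and verify their equality, which is considerably more than Theorem~\ref{thm:chiy} provides.
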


\begin{proof}
Let us consider the function
\[ \phi(\tau,z) := \frac{(-1)^nZ(f,G)}{Z(\widetilde{f},\widetilde{G})}(\tau,z).
\]
This is a meromorphic function in $z$.
By \cite[(4.10)]{KYY}, we have 
\begin{equation} \label{eq:per_z}
\phi(\tau, z + \lambda \tau + \mu) = \phi(\tau,z)
\end{equation}
for $\lambda , \mu \in |G_f|\ZZ=|G_{\widetilde{f}}|\ZZ$. Moreover, by \cite[(4.9)]{KYY}, we have
\begin{eqnarray}
\phi \left( \frac{-1}{\tau} , \frac{-z}{\tau} \right) & = & \phi(\tau,z), \label{eq:S} \\
\phi(\tau+1,z) & = & \phi(\tau,z). \label{eq:T}
\end{eqnarray}
Equations~(\ref{eq:per_z}), (\ref{eq:S}), and (\ref{eq:T}) imply that $\phi$ is constant.
By Theorem~\ref{thm:chiy}, we have
\[
\lim_{\tau \to \sqrt{-1} \infty} \phi(\tau,z) =1.
\]
Therefore $\phi$ is identically equal to one (cf.\ also the paper of P.~Di Francesco and S.~Yankielowicz \cite[Appendix A, Lemma]{DY}).
\end{proof}

\section{Orbifoldized signature}
Let $f(x_1, \ldots, x_n)$ be a non-degenerate weighted homogeneous polynomial with weights $w_1, \ldots, w_n$ and let $n$ be odd. We have the following reformulation of \cite[Sect.~5, Corollary]{st:2}:

\begin{theorem} \label{thm:st:ourform}
Let $\mu_+$, $\mu_-$, and $\mu_0$ be the number of positive, negative and zero eigenvalues of the intersection form on $H_{n-1}(V_f, \RR)$ respectively. Then
\begin{eqnarray*}
\mu_+ & = & \sharp \{ q \in I(f,\{{\rm id}\}) \, | \, 0< q < 1 \mbox{ mod } 2\} = \sum_{p+q=n, \atop 0 < q <1 \, {\rm  mod} \, 2} h^{p,q}(f, \{ {\rm id} \}),\\
\mu_-  &  = & \sharp \{ q \in I(f,\{{\rm id}\})  \, | \, 1< q < 2 \mbox{ mod } 2\} = \sum_{p+q=n, \atop1 < q <2 \, {\rm  mod} \, 2} h^{p,q}(f, \{ {\rm id} \}) ,\\
\mu_0 & = & \sharp \{ q \in I(f,\{{\rm id}\})  \, | \, q \in \ZZ \} = \sum_{p+q=n, \atop q \in \ZZ} h^{p,q}(f, \{ {\rm id} \}).
\end{eqnarray*}
\end{theorem}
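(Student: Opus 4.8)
The plan is to deduce Theorem~\ref{thm:st:ourform} from the classical Steenbrink result, Theorem~\ref{thm:st}, by reconciling the two pieces of bookkeeping: the combinatorics of the set of monomial exponents $\{l(\beta)\mid\beta\in I\}$ on the one hand, and the Hodge numbers $h^{p,q}(f,\{{\rm id}\})$ attached to the pure Hodge structure $\H_f$ on the other. First I would recall that for a non-degenerate weighted homogeneous polynomial the spectral numbers are exactly the numbers $l(\beta)=\sum_{i=1}^n(\beta_i+1)w_i$ with $\beta$ ranging over a monomial basis of the Milnor algebra, and that $l(\beta)$ encodes simultaneously the monodromy eigenvalue ${\bf e}[l(\beta)]$ on $H^{n-1}(V_f,\CC)$ and the Hodge filtration level: namely the Hodge piece in which $\beta$ sits has $p=n-\lceil l(\beta)\rceil$ when $l(\beta)\notin\ZZ$ (and $p = n - l(\beta)$ when $l(\beta)\in\ZZ$). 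Translating through the definition of $\H_f^{p,q}$ given in Section~2 — where the $\lambda$-eigenspace with $\lambda=e^{2\pi\sqrt{-1}p}$ contributes to $\H_f^{[p],n-[p]}$ for $p\notin\ZZ$, and $\H_f^{p,n-p}$ for $p\in\ZZ$ — one sees that a monomial $\beta$ with spectral number $l(\beta)$ contributes to $\H_f^{p,q}$ with $q = l(\beta)$ precisely. Thus the multi-set $I(f,\{{\rm id}\})$ of exponents is literally the multi-set $\{l(\beta)\mid\beta\in I\}$.

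The next step is a purely arithmetic matching of the index sets appearing in the two theorems. In Steenbrink's original formulation the conditions are on the integer part $[l(\beta)]$: ``$l(\beta)\notin\ZZ$ and $[l(\beta)]$ even'' versus ``$l(\beta)\notin\ZZ$ and $[l(\beta)]$ odd''. I would observe that for $l(\beta)\notin\ZZ$ the condition ``$[l(\beta)]$ even'' is equivalent to ``$0 < l(\beta) < 1 \bmod 2$'' and ``$[l(\beta)]$ odd'' is equivalent to ``$1 < l(\beta) < 2 \bmod 2$'', since writing $l(\beta) = m + \{l(\beta)\}$ with $m=[l(\beta)]$ and fractional part $\{l(\beta)\}\in(0,1)$, we have $l(\beta)\bmod 2 = (m\bmod 2) + \{l(\beta)\}$, which lies in $(0,1)$ iff $m$ is even and in $(1,2)$ iff $m$ is odd. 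The case $l(\beta)\in\ZZ$ matches the $\mu_0$ clause verbatim. Combining this dictionary with the identification $I(f,\{{\rm id}\}) = \{l(\beta)\}$ and the equality $\sharp\{q\in I(f,\{{\rm id}\})\mid\ \cdot\ \} = \sum_{p+q=n,\ \cdot} h^{p,q}(f,\{{\rm id}\})$, which holds because $h^{p,q}(f,\{{\rm id}\})$ is by definition the number of monomials landing in the $(p,q)$-slot and $p$ is determined by $q=l(\beta)$ via $p=n-q$, yields exactly the three displayed formulas.

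The step I expect to be the main obstacle — really the only non-formal point — is pinning down precisely the relation between a monomial's spectral number and the Hodge filtration level, i.e.\ checking that the normalization of $F^\bullet$ used here (via the mixed Hodge structure of Steenbrink \cite{st:1} and the conventions of \cite{Ya} recalled in Section~2) makes the identification $q=l(\beta)$ hold on the nose rather than up to a shift by $1$ or a reflection $q\mapsto n-q$. One has to be careful with the two competing conventions (ascending vs.\ descending spectrum, and whether the exponent is $l(\beta)$ or $n-l(\beta)$), and with the fact that $\H_f$ is declared to be pure of weight $n$ with $p+q=n$, so that once $q$ is fixed $p$ is forced. I would resolve this by tracing through the definition of $\H_f^{p,q}$ in cases (ii) and (iii) of Section~2 against the standard description of the Hodge spectrum of a weighted homogeneous singularity (e.g.\ as in \cite{st:2}), confirming that the monodromy eigenvalue ${\bf e}[l(\beta)]$ attached to $\beta$ forces $\beta$ into $\H_f^{n-l(\beta),\,l(\beta)}$. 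Once this is in place the remainder is the elementary $\bmod\ 2$ computation above, and Theorem~\ref{thm:st:ourform} follows directly from Theorem~\ref{thm:st}.
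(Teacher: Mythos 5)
Your proposal is correct and matches the paper's (implicit) argument: the paper offers no proof beyond declaring the statement a reformulation of Steenbrink's corollary, and your deduction from Theorem~\ref{thm:st} --- identifying the multi-set $I(f,\{{\rm id}\})$ with the multi-set of spectral numbers $l(\beta)$ and matching the parity of $[l(\beta)]$ with the condition on $q \bmod 2$ --- is exactly the translation being invoked. You also correctly isolate the only delicate point, namely the normalization of the Hodge filtration that makes $q=l(\beta)$ hold on the nose rather than up to a shift or the reflection $q\mapsto n-q$.
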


Let  ${\rm sign}\, f = \mu_+ - \mu_-$ be the signature of the Milnor fibre $V_f$ of $f$.  Then the signature of the Milnor fibre can be computed from the elliptic genus $Z(\tau, z)$ as follows:

\begin{proposition} \label{prop:sign}
We have
\[ {\rm sign}\, f = \frac{2}{\pi} \sum_{k=- \infty \atop k\, {\rm odd}}^\infty \frac{(-1)^{\frac{kn-1}{2}}}{k} Z(\sqrt{-1} \infty, \frac{k}{2}) .
\]
\end{proposition}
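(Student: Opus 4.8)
The plan is to compute the right-hand side directly and identify it with Steenbrink's count of $\mu_+-\mu_-$. \textbf{Step 1} is to pass to the limit $\tau\to\sqrt{-1}\infty$. Since $q=e^{2\pi\sqrt{-1}\tau}\to 0$, every infinite product occurring in $Z_{w_i}(\tau,z)$ tends to $1$, so that, writing $y=e^{2\pi\sqrt{-1}z}$,
\[
Z(\sqrt{-1}\infty,z)=\prod_{i=1}^{n}y^{\,w_i-\frac12}\;\frac{1-y^{1-w_i}}{1-y^{w_i}}.
\]
Since $f$ has an isolated singularity, $\frac{\partial f}{\partial x_1},\dots,\frac{\partial f}{\partial x_n}$ form a regular sequence of homogeneous elements of degrees $1-w_1,\dots,1-w_n$ for the grading $\deg x_i=w_i$, so by the Koszul complex the Hilbert series of the Milnor algebra is $\prod_{i=1}^{n}\frac{1-y^{1-w_i}}{1-y^{w_i}}=\sum_{\beta\in I}y^{\sum_j\beta_jw_j}$; in particular this product is a genuine Laurent polynomial with no poles. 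Combining this with $\sum_jw_j-\frac n2+\sum_j\beta_jw_j=l(\beta)-\frac n2$ yields the closed form
\[
Z(\sqrt{-1}\infty,z)=\sum_{\beta\in I}y^{\,l(\beta)-\frac n2}.
\]

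\textbf{Step 2} is the Fourier-analytic part. Putting $z=\frac k2$ with $k$ odd gives $Z(\sqrt{-1}\infty,\frac k2)=e^{-\pi\sqrt{-1}kn/2}\sum_{\beta\in I}e^{\pi\sqrt{-1}kl(\beta)}$, and a direct check shows that, because $k$ and $n$ are both odd, $(-1)^{(kn-1)/2}e^{-\pi\sqrt{-1}kn/2}=-\sqrt{-1}$ for every odd $k$. Interchanging the finite sum over $\beta$ with the sum over odd $k$, the right-hand side of the proposition becomes
\[
\frac{-2\sqrt{-1}}{\pi}\sum_{\beta\in I}\;\sum_{k\,{\rm odd}}\frac{e^{\pi\sqrt{-1}kl(\beta)}}{k}.
\]
For $q\notin\ZZ$, pairing $k$ with $-k$ identifies $\sum_{k\,{\rm odd}}\frac{e^{\pi\sqrt{-1}kq}}{k}$ with $2\sqrt{-1}\sum_{k\geq1,\,{\rm odd}}\frac{\sin(\pi kq)}{k}=\frac{\pi\sqrt{-1}}{2}(-1)^{[q]}$, the Fourier series of the square wave equal to $+1$ on $(0,1)$ and to $-1$ on $(1,2)$; for $q\in\ZZ$ this sum vanishes. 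Taking $q=l(\beta)$ and simplifying the constants, one obtains
\[
\frac{2}{\pi}\sum_{k\,{\rm odd}}\frac{(-1)^{(kn-1)/2}}{k}\,Z\!\left(\sqrt{-1}\infty,\tfrac k2\right)=\sum_{\beta\in I,\,l(\beta)\notin\ZZ}(-1)^{[l(\beta)]}=\mu_+-\mu_-={\rm sign}\,f,
\]
the last two equalities being Steenbrink's theorem in the form of Theorem~\ref{thm:st} (equivalently Theorem~\ref{thm:st:ourform}).

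I expect the genuine work to lie in Step 2. The sum over $k$ is only conditionally convergent, so it must be read as $\lim_{N\to\infty}\sum_{|k|\leq N}$, and one has to justify both the interchange with the finite sum over $\beta$ and the pointwise convergence of the square-wave Fourier series (Dirichlet's test), paying attention to the jump points $q\in\ZZ$. A minor but worthwhile remark is that, although an individual factor $\frac{1-y^{1-w_i}}{1-y^{w_i}}$ can acquire a pole at $y=-1$, the product over all $i$ does not, so the value $Z(\sqrt{-1}\infty,\frac k2)$ is computed legitimately through the Laurent polynomial of Step~1. As an alternative to Step~1 one could instead identify $Z(\sqrt{-1}\infty,z)$ with a shift of the generating function of $I(f,\{{\rm id}\})$ via the relations for $\chi(f,\{{\rm id}\})(y)$ recalled in the previous section, but the Koszul-complex computation above is self-contained.
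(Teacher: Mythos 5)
Your proof is correct and follows essentially the same route as the paper's: both expand the square-wave function $(-1)^{[q]}$ (set to zero on $\ZZ$) into its Fourier series over odd $k$ as in Hirzebruch--Zagier, identify $Z(\sqrt{-1}\infty,z)$ with $y^{-n/2}$ times the generating function $\sum_{\beta}y^{l(\beta)}$ of the exponents, and conclude by Steenbrink's theorem. The only cosmetic differences are that you derive the generating-function identity self-containedly via the Koszul complex where the paper reads it off from Theorem~\ref{thm:st:ourform}, and that you are more explicit about the conditional convergence and the interchange of sums.
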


\begin{proof} Let $h: \RR \to \{-1,0,+1\}$ be the function defined by
\[
h(x) = \left\{ \begin{array}{cl} (-1)^{[x]} & \mbox{for } x \not\in \ZZ, \\ 0 & \mbox{for } x \in \ZZ. \end{array} \right. 
\]
By Theorem~\ref{thm:st:ourform}, we have 
\begin{eqnarray*}
{\rm sign}\, f &=& \sharp \{ q \in I(f,\{{\rm id}\}) \, | \, 0< q < 1 \mbox{ mod } 2\} - \sharp \{ q \in I(f,\{{\rm id}\}) \, | \, 1< q < 2 \mbox{ mod } 2\}\\
&=&\sum_{p+q=n, \atop 0 < q <1 \, {\rm mod} \, 2} h^{p,q}(f, \{ {\rm id} \})-\sum_{p+q=n, \atop1 < q <2 \, {\rm mod} \, 2} h^{p,q}(f, \{ {\rm id} \})\\
&=& \sum_{q \in I(f,\{{\rm id}\})} h(q).
\end{eqnarray*}
By F.~Hirzebruch and D.~Zagier \cite[p.~105]{HZ}, this formula is equal to
\begin{eqnarray*}
{\rm sign}\, f & = & \sum_{q \in I(f,\{{\rm id}\})} \frac{2}{\pi \sqrt{-1}} \sum_{k=- \infty \atop k\, {\rm odd}}^\infty \frac{1}{k} e^{\pi \sqrt{-1} k q} \\
 & = & \frac{2}{\pi \sqrt{-1}} \sum_{k=- \infty \atop k\, {\rm odd}}^\infty \frac{1}{k} \left( \sum_{q \in I(f,\{{\rm id}\})} e^{\pi \sqrt{-1} k q} \right) \\
 & = & \frac{2}{\pi \sqrt{-1}} \sum_{k=- \infty \atop k\, {\rm odd}}^\infty \frac{1}{k} \prod_{i=1}^n \frac{e^{\pi \sqrt{-1} k w_i}-e^{\pi \sqrt{-1} k}}{1 - e^{\pi \sqrt{-1} k w_i}}.
\end{eqnarray*}
On the other hand, we have by the definition of the elliptic genus
\[ Z(\sqrt{-1} \infty, z) = \prod_{i=1}^n y^{-(1-2w_i)/2} \frac{(1-y^{1-w_i})}{(1-y^{w_i})} = y^{-n/2} \prod_{j=1}^n \frac{y^{w_i} -y}{1-y^{w_i}},
\]
where $y=e^{2 \pi \sqrt{-1} z}$.
From this we can deduce the formula of the proposition.
\end{proof}

\begin{remark} 
Since $Z(f,G)(\sqrt{-1} \tau,z) = y^{-\hat{c}/2} \chi(f,G)(y)$, the signature of $V_f$ can already be derived from the $\chi_y$-genus by substituting $y=e^{2 \pi \sqrt{-1} z}$.
\end{remark}

Proposition~\ref{prop:sign} leads to the following definition of an orbifoldized signature.
\begin{definition}
Let $f(x_1, \ldots , x_n)$ be a non-degenerate weighted homogeneous polynomial, $G$ be a subgroup of $G_f$, and let $n$ be odd. The {\em orbifoldized signature} is defined by
\[
{\rm sign}(f,G) := \left\{ \begin{array}{ll}  \frac{2}{\pi} \sum_{k=- \infty \atop k\, {\rm odd}}^\infty \frac{(-1)^{\frac{kn-1}{2}}}{k} Z(f,G)(\sqrt{-1} \infty, \frac{k}{2}) & \mbox{if } G \subset {\rm SL}(3; \CC) \cap G_f, \\
- \frac{2}{\pi} \sum_{k=- \infty \atop k\, {\rm odd}}^\infty \frac{(-1)^{\frac{kn-1}{2}}}{k} Z(f,G)(\sqrt{-1} \infty, \frac{k}{2}) & \mbox{if } G_0 \subset G.
\end{array} \right. 
\]
\end{definition}

\begin{remark} \label{rem:sign}
From the proof of Proposition~\ref{prop:sign}, one can see that the analogues of the right-hand side equalities of Theorem~\ref{thm:st:ourform} for $\{ {\rm id} \}$ replaced by $G$ hold.
\end{remark}

From Corollary~\ref{cor:ellgen} we get
\begin{corollary}
Let $f(x_1, \ldots , x_n)$ be an invertible polynomial, $G \subset {\rm SL}(3; \CC) \cap G_f$, and $n$ odd. Then 
\[
{\rm sign}(f,G) = {\rm sign}(\widetilde{f},\widetilde{G}).
\]
\end{corollary}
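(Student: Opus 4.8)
The plan is to derive the Corollary directly from Corollary~\ref{cor:ellgen} together with the compatibility of the dual-group construction with the relevant $\mathrm{SL}$-conditions. First I would recall that, as noted in Section~1, the condition $G \subset \mathrm{SL}(n;\CC) \cap G_f$ is equivalent to $G_0 \subset \widetilde{G}$ (this is the statement that $G_0 \subset G$ if and only if $\widetilde{G} \subset \mathrm{SL}(n;\CC) \cap G_{\widetilde{f}}$, applied to the dual pair, using $\widetilde{\widetilde{f}} = f$ and $\widetilde{\widetilde{G}} = G$). So when $G \subset \mathrm{SL}(3;\CC) \cap G_f$ we are in the first case of the definition of $\mathrm{sign}(f,G)$, while the dual pair $(\widetilde{f},\widetilde{G})$ satisfies $G_0 \subset \widetilde{G}$ and hence falls under the second case of the definition of $\mathrm{sign}$.

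Next I would simply substitute into the two defining formulas. By definition,
\[
\mathrm{sign}(f,G) = \frac{2}{\pi} \sum_{k\ \mathrm{odd}} \frac{(-1)^{\frac{kn-1}{2}}}{k}\, Z(f,G)\left(\sqrt{-1}\infty, \tfrac{k}{2}\right),
\]
and, since $G_0 \subset \widetilde{G}$,
\[
\mathrm{sign}(\widetilde{f},\widetilde{G}) = -\frac{2}{\pi} \sum_{k\ \mathrm{odd}} \frac{(-1)^{\frac{kn-1}{2}}}{k}\, Z(\widetilde{f},\widetilde{G})\left(\sqrt{-1}\infty, \tfrac{k}{2}\right).
\]
By Corollary~\ref{cor:ellgen}, $Z(\widetilde{f},\widetilde{G})(\tau,z) = (-1)^n Z(f,G)(\tau,z)$; taking the limit $\tau \to \sqrt{-1}\infty$ and using that $n$ is odd gives $Z(\widetilde{f},\widetilde{G})(\sqrt{-1}\infty,\tfrac{k}{2}) = -Z(f,G)(\sqrt{-1}\infty,\tfrac{k}{2})$. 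Plugging this into the second formula cancels the leading minus sign, yielding exactly the first formula, so $\mathrm{sign}(\widetilde{f},\widetilde{G}) = \mathrm{sign}(f,G)$.

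There is essentially no hard analytic step here; the proof is a bookkeeping exercise combining Corollary~\ref{cor:ellgen} with the equivalence of the group-theoretic hypotheses under duality. The one point that deserves care — and which I expect to be the only real subtlety — is verifying that $(f,G)$ and $(\widetilde{f},\widetilde{G})$ genuinely land in complementary cases of the definition of the orbifoldized signature, i.e.\ confirming that $G \subset \mathrm{SL}(3;\CC)\cap G_f$ forces $G_0 \subset \widetilde{G}$ via the biduality $\widetilde{\widetilde{f}} = f$, $\widetilde{\widetilde{G}} = G$ and the stated equivalence from Section~1. Once that is in place, the sign $(-1)^n = -1$ from Corollary~\ref{cor:ellgen} precisely compensates the opposite sign convention between the two branches of the definition, and the corollary follows.
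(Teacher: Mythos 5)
Your proposal is correct and takes essentially the same route as the paper, which derives the corollary directly from Corollary~\ref{cor:ellgen}: the factor $(-1)^n=-1$ exactly compensates the opposite sign conventions in the two branches of the definition of ${\rm sign}(f,G)$, as you spell out. The one point you rightly flag --- that $G\subset{\rm SL}(3;\CC)\cap G_f$ forces $G_0\subset\widetilde{G}$, so the dual pair lands in the second branch --- does follow from the equivalence stated in Section~1 together with biduality $\widetilde{\widetilde{f}}=f$, $\widetilde{\widetilde{G}}=G$, so there is no gap.
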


\section{Orbifold lattices}
Let $f(x_1, \ldots , x_n)$ be an invertible polynomial, $G$ a subgroup of $G_f$, and let $n=3$. We offer two lattices which conjecturally have the orbifoldized signature.

First we consider the case $G \subset {\rm SL}(3;\CC) \cap G_f$. 
The following construction is inspired by \cite{ET3}. We consider the crepant resolution $Y:=G\text{-}{\rm Hilb}(\CC^3)$  of $X:=\CC^3/G$. Denote by $W$ the subvariety of $X$ defined by the image of the smooth analytic subspace 
$V=V_f$ of $\CC^3$ under the natural projection $\pi:\CC^3\longrightarrow X$. Denote by $h: Y \to X$ the resolution map and let $Z:=h^{-1}(W)$. The notations are summarized in the following diagram:
\begin{equation*}
\xymatrix{
& Z \ar@{^{(}->}[r]\ar[d]_{h|_Z} & Y \ar[d]_h\\
V \ar[r]_{\pi|_V}  & W \ar@{^{(}->}[r] & X=\CC^3/G
}.
\end{equation*}
 We have an exact sequence \cite{ET3}
\begin{equation*}
\xymatrix{
0\ar[r] & H_3(Y,Z;\ZZ) \ar[r]^{\iota}  & H_2(Z;\ZZ) \ar[r] & H_2(Y;\ZZ) \ar[r] & H_2(Y,Z;\ZZ) \ar[r] & 0.
}
\end{equation*}
Moreover, we have
\[
H_4(Y;\ZZ) \cong H_4(Y,Z;\ZZ).
\]

\begin{definition} We define the {\em orbifoldized Milnor lattice} $\Lambda^{(A)}_{f,G}$ to be the free part of
\[
{\rm Im}( H_3(Y,Z;\ZZ) \stackrel{\iota}{\hookrightarrow} H_2(Z; \ZZ))
\]
equipped with the restriction of the intersection form on $H_2(Z; \ZZ)$ to the image of $\iota$.
\end{definition}

We have the following facts.

\begin{proposition} \label{prop:facts}
\begin{itemize}
\item[{\rm (i)}] If $G=\{ {\rm id} \}$, then $\Lambda^{(A)}_{f,G}$ is the usual Milnor lattice $H_2(V_f;\ZZ)$ with the intersection form.
\item[{\rm (ii)}] The rank of $\Lambda^{(A)}_{f,G}$ is
\[
{\rm rank}\, \Lambda^{(A)}_{f,G} = \sum_{p+q=3 \atop p,q \in \QQ} h^{p,q}(f,G).
\]
\item[{\rm (iii)}] For the invariants $(\mu_+, \mu_0, \mu_-)$ of the lattice $\Lambda^{(A)}_{f,G}$ we have
\[
(\mu_+, \mu_0, \mu_-) = \left( \sum_{p+q=3 \atop q < 1} 2h^{p,q}(f,G), \sum_{p+q=3 \atop q =1} 2h^{p,q}(f,G), \sum_{p+q=3 \atop 1 < q <2} h^{p,q}(f,G) \right).
\]
In particular, $\mu_+ - \mu_- = {\rm sign}(f,G)$.
\end{itemize}
\end{proposition}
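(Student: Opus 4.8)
The plan is to handle the three items separately, with (ii) and (iii) being the substantive ones and following from a careful Hodge-theoretic analysis of the exact sequence and the resolution $h:Y\to X$.

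For (i): when $G=\{{\rm id}\}$ we have $X=\CC^3$, $Y=X$, $h={\rm id}$, so $Z=W=V=V_f$. The relative long exact sequence of the pair $(Y,Z)=(\CC^3,V_f)$ degenerates: since $\CC^3$ is contractible, $H_3(\CC^3,V_f;\ZZ)\cong H_2(V_f;\ZZ)$ and the connecting map $\iota$ is precisely this isomorphism. The intersection form on $H_2(Z;\ZZ)=H_2(V_f;\ZZ)$ restricted to the image is then the usual Milnor intersection form, so $\Lambda^{(A)}_{f,\{{\rm id}\}}$ is the classical Milnor lattice; this is essentially a reformulation of how the Milnor lattice sits inside the (vanishing) homology, as in \cite{ET3}.

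For (ii) and (iii): the strategy is to compute, in parallel, (a) the rank and signature of the abstract lattice via the exact sequence, and (b) the Hodge-theoretic right-hand sides, and match them. First I would identify ${\rm Im}(\iota)$ up to finite index with the kernel of $H_2(Z;\ZZ)_{\rm free}\to H_2(Y;\ZZ)_{\rm free}$, which by the exact sequence equals the image of $H_3(Y,Z;\ZZ)$; dually, using $H_4(Y;\ZZ)\cong H_4(Y,Z;\ZZ)$ and Poincaré–Lefschetz duality on the (open, oriented, $6$-real-dimensional) manifold $Y$ together with duality for the $4$-real-dimensional $Z$, one gets ${\rm rank}\,\Lambda^{(A)}_{f,G}=b_2(Z)-b_2(Y)=b_2(Z)-b_4(Y)$ (using that the boundary maps at the ends of the sequence are as indicated). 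Now $Z=h^{-1}(W)$ where $W=V/G$ and $h$ is the crepant resolution; the cohomology of $Z$ decomposes by the McKay/orbifold-cohomology philosophy (for $G\subset{\rm SL}(3)$ the crepant resolution realizes the orbifold cohomology of $[\CC^3/G]$, and correspondingly of $[V/G]$) into the $G$-invariant part of $H^*(V_f)$ twisted by the age-shifted contributions from $\bigoplus_{g\in G}H^*(V^g)^G$. This is exactly the bigraded space $\H_{f,G}=\bigoplus_{g}(\H_{f^g})^G(-{\rm age}(g))$ appearing in the definition of the Hodge numbers $h^{p,q}(f,G)$. Matching the middle-dimensional ($p+q=3$, i.e.\ real degree corresponding to the vanishing cycles) part of this decomposition with ${\rm Im}(\iota)$ yields ${\rm rank}\,\Lambda^{(A)}_{f,G}=\sum_{p+q=3}h^{p,q}(f,G)$. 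For the signature, I would put a mixed Hodge structure on ${\rm Im}(\iota)$ compatible with the intersection form (a polarized piece of weight $3$ coming from Steenbrink's MHS on each $\H_{f^g}$, age-shifted), and then invoke the Hodge–Riemann bilinear relations exactly as in Theorem~\ref{thm:st:ourform}/Remark~\ref{rem:sign}: the monodromy eigenvalue $e^{2\pi\sqrt{-1}p}$ and the parity of $[p]$ (equivalently the range of $q=3-p$ mod $2$) dictate the sign, giving the stated $(\mu_+,\mu_0,\mu_-)$; the factor $2$ on the $q<1$ and $q=1$ terms versus $1$ on $1<q<2$ comes from pairing $\H^{p,q}$ with $\H^{q,p}$ (complex-conjugate eigenspaces contribute a hyperbolic plane when $q\notin\ZZ$ and $q\neq 1$, while the ``$q<1$'' and ``$q=1$'' bookkeeping absorbs these). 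Finally $\mu_+-\mu_-={\rm sign}(f,G)$ is then immediate from Remark~\ref{rem:sign}, which says precisely that the right-hand equalities of Theorem~\ref{thm:st:ourform} hold with $\{{\rm id}\}$ replaced by $G$.

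The main obstacle I expect is (iii): controlling the intersection form on ${\rm Im}(\iota)\subset H_2(Z;\ZZ)$ precisely enough to read off signs. The rank statement (ii) only needs Betti-number bookkeeping via the exact sequence plus the orbifold-cohomology identification of $H^*(Z)$, which is comparatively soft. But to get the signature one must know that the pieces of ${\rm Im}(\iota)$ indexed by $g\in G$ with ${\rm age}(g)>0$ contribute with the sign predicted by the age shift — i.e.\ that the intersection form restricted to the ``twisted sectors'' is governed by a genuine polarized Hodge structure of the shifted weight — and that the exceptional divisors of $h$ (living in $H_4(Y)$, split off by $H_4(Y;\ZZ)\cong H_4(Y,Z;\ZZ)$) do not contaminate ${\rm Im}(\iota)$. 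Making this rigorous likely requires either a mixed Hodge module / weight-filtration argument on the resolution $h:Z\to W$, or a direct Picard–Lefschetz-type analysis of vanishing cycles on $Z$ as in \cite{ET3}; I would structure the proof so that the hard analytic input is isolated into a single lemma identifying ${\rm Im}(\iota)\otimes\CC$ with the weight-$3$ part of a MHS isomorphic to $\bigoplus_{g}(H^2(V^g_{f^g})_{\rm van})^G$ age-shifted, after which the Hodge–Riemann relations finish the argument mechanically.
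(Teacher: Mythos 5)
Your overall strategy coincides with the paper's: (i) is immediate; (ii) is proved by combining the four-term exact sequence with an orbifold-cohomology computation of $H^*(Y)$ and $H^*(Z)$ (the paper makes your ``McKay philosophy'' precise via Yasuda's motivic integration over Deligne--Mumford stacks, \cite[Corollary~72]{Ya}, computing the classes of $H^*_c(Y)$ and $H^*_c(Z)$ in $K_0(MHS^{1/d})$ and isolating $H^3(Y,Z;\QQ)$ by a weight argument); and (iii) is read off from the resulting identification of $H^3(Y,Z;\QQ)$ with $H^2(V;\QQ)^G\oplus\bigoplus_{{\rm age}(g)=1,\,n_g=1}\widetilde H^0(V^g;\QQ)^G(-1)$ together with Theorem~\ref{thm:st:ourform}, Serre duality, and the fact that a Tate twist $(-1)$ reverses the sign of the intersection form. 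You correctly identify this identification as the key lemma to be isolated.

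Two concrete points are wrong as stated. First, the intermediate formula ${\rm rank}\,\Lambda^{(A)}_{f,G}=b_2(Z)-b_2(Y)$ does not follow from the exact sequence: since $\iota$ is injective, the rank equals $b_3(Y,Z)=b_2(Z)-b_2(Y)+b_2(Y,Z)$, and $H_2(Y,Z;\QQ)\cong\QQ^{j_G}$ is nonzero whenever $G$ has elements of age $1$ with $n_g=0$; likewise $b_2(Y)\ne b_4(Y)$ in general, because $Y$ is noncompact and $H_2(Y)$ is dual to $H^4_c(Y)$, not to $H^4(Y)$. The correct count is $E(f,G)(1,1)-2j_G$, obtained after splitting off the pure Tate pieces $H^2(Y,Z;\QQ)\cong\QQ(-1)^{j_G}$ and $H^4(Y,Z;\QQ)\cong\QQ(-2)^{j_G}$. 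Second, your explanation of the factor $2$ in (iii) via ``hyperbolic planes'' is incorrect: a hyperbolic plane contributes equally to $\mu_+$ and $\mu_-$, which would contradict the asserted formula. The actual mechanism (Steenbrink, Theorem~\ref{thm:st:ourform}) is that the sign attached to an exponent $q\notin\ZZ$ is $(-1)^{[q]}$, so a Serre-dual pair $q\in(0,1)$, $3-q\in(2,3)$ has $[q]=0$ and $[3-q]=2$ both even and contributes $+2$ to $\mu_+$, while the exponents with $q\in(1,2)$ are already closed under $q\mapsto 3-q$, whence no factor $2$ there. Neither error is fatal to the plan, but both must be repaired before the computation closes.
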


\begin{proof}
(i) is trivial.

(ii) We have
\[
H^k(Y,Z;\ZZ)=0,\quad k \ne 2,3,4. 
\]
Moreover, one can show that there are the following isomorphisms  (cf.\ \cite[(2.16)]{ET3}):
\begin{eqnarray}
H_2(Y,Z;\ZZ) \otimes_\ZZ \QQ  & \cong & \QQ^{j_G}, \label{eq:2jG} \\
H_4(Y,Z;\ZZ) \otimes_\ZZ \QQ  & \cong & \QQ^{j_G}, \label{eq:4jG}
\end{eqnarray} 
where
\[
j_G := \sharp \{ g \in G \, | \, {\rm age}(g)=1, n_g=0 \}.
\]
Denote by $MHS^{1/d}$ the category of $\frac{1}{d} \ZZ$-indexed mixed Hodge structures (see \cite[Sect.~3.8]{Ya}). For $a \in \frac{1}{d} \ZZ$, let $\QQ(a)$ be the Tate-Hodge structure defined to be the one-dimensional $\frac{1}{d} \ZZ$-indexed Hodge structure $H$ such that $H^{-a,-a}$ is the only non-zero component of $H \otimes_{\QQ} \CC$. Denote the class $[\QQ(a)]$ of $\QQ(a)$ in $K_0(MHS^{1/d})$ by $\LL^{-a}$. By \cite[Corollary~72]{Ya}, we have the following equality for $Y$ in $K_0(MHS^{1/d})$
\begin{equation*}
\sum_i (-1)^i \left[ H^i_c(Y; \QQ) \right] = \sum_{g \in G} \LL^{n- {\rm age}(g)}.
\end{equation*}
By Poincar\'e duality, we get from this
\begin{equation}
\sum_i (-1)^i \left[ H^i(Y; \QQ) \right] = \sum_{g \in G} \LL^{{\rm age}(g)}. \label{eq:Y}
\end{equation}
Similarly, we obtain for $Z$
\begin{equation*}
\sum_i (-1)^i \left[ H^i_c(Z; \QQ) \right] = \sum_{g \in G} \left( \sum_i (-1)^i \left[ H^i_c([V^g/G]; \QQ) \right] \right) \LL^{n- n_g -{\rm age}(g)},
\end{equation*}
where $V^g:= \{ f^g=1 \} \subset \CC^{n_g}$. Again, it follows that
\begin{eqnarray}
\sum_i (-1)^i \left[ H^i(Z; \QQ) \right] &  = & \sum_{g \in G} \left( \sum_i (-1)^i \left[ H^i([V^g/G]; \QQ) \right] \right) \LL^{{\rm age}(g)} \nonumber \\
& = & \sum_{g \in G} \left( (-1)^{n_g} \left[ \widetilde{H}^{n_g-1}(V^g; \QQ)^G \right]  \LL^{{\rm age}(g)} + \LL^{{\rm age}(g)} \right). \label{eq:Z}
\end{eqnarray}
From Equations~(\ref{eq:Y}) and (\ref{eq:Z}) we obtain
\begin{equation}
\sum_i (-1)^i \left[ H^i(Y,Z; \QQ) \right] = \sum_{g \in G} (-1)^{n_g} \left[ \widetilde{H}^{n_g-1}(V^g; \QQ)^G \right]  \LL^{{\rm age}(g)}.
\end{equation}
In particular, we obtain
\[
\sum_i (-1)^i \dim_{\QQ} H^i(Y,Z; \QQ) = E(f,G)(1,1).
\]
By the isomorphisms (\ref{eq:2jG}) and (\ref{eq:4jG}), we get
\[ 
\dim_{\QQ} H^3(Y,Z; \QQ) = E(f,G)(1,1) -2j_G.
\]
More precisely, by comparing weights, we have the following isomorphisms of $\frac{1}{d} \ZZ$-indexed mixed Hodge structures:
\begin{eqnarray}
H^2(Y,Z; \QQ) & \cong & \QQ(-1)^{j_G} \quad (\mbox{weight 2}),  \\
H^4(Y,Z; \QQ) & \cong & \QQ(-2)^{j_G} \quad (\mbox{weight 4}),  \\
H^3(Y,Z; \QQ) & \cong &  H^2(V; \QQ)^G \oplus \bigoplus_{g \in G \atop {\rm age}(g)=1, n_g=1} \widetilde{H}^0(V^g;\QQ)^G(-1) \quad (\mbox{weight 3}). \label{eq:H3}
\end{eqnarray}
This implies the formula for the rank.

(iii) This follows from the isomorphism (\ref{eq:H3}) and Theorem~\ref{thm:st:ourform}, since the Tate twist $(-1)$ changes the sign of the intersection form. We also use Serre duality of Hodge numbers
\[
h^{n-p,n-q}(f,G)=h^{p,q}(f,G).
\]
The last statement of (iii) follows from Remark~\ref{rem:sign}.
\end{proof}

In \cite{ET1}, the pairs $(f,G)$ where $f$ defines an ADE singularity are classified. A.~Basalaev, E.~Werner and the second author \cite{BTW1} have shown that $Y$ can be covered by charts such that the  induced function $\widehat{f} : Y \to \CC$ has singularities only in one chart $Y_1 \cong \CC^3$. The same fact holds for the quotients of some of the exceptional unimodal singularities considered in \cite{BTW2} (see \cite[Sect.~6.4]{We}, \cite[Sect.~6]{BT}). Let the restriction of the function $\widehat{f}$ to this chart $Y_1$ be denoted by $\overline{f}$. Then we have the following proposition.
\begin{proposition}
Let $f$ define an ADE singularity or be one of the exceptional unimodal singularities in \cite[Table~3]{BTW2} and $G$ be a subgroup of ${\rm SL}(3;\CC) \cap G_f$. Then the lattice $\Lambda^{(A)}_{f,G}$ is isomorphic to the Milnor lattice of $\overline{f}$.
\end{proposition}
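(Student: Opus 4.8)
The plan is to reduce the statement to the local theory of the isolated singularity of $\overline f$ and to identify $\Lambda^{(A)}_{f,G}$ with the lattice of vanishing cycles of the pencil $\widehat f$, all of which are concentrated at its unique critical point. First I would pass to a local model: since $0$ is the only singular point of $X$ and $f_X(0)=0\neq 1$, the resolution $h$ restricts to an isomorphism over the Milnor fibre $W=f_X^{-1}(1)$, so $Z=h^{-1}(W)$ is identified with $W$, and it is smooth near the point of interest because $1$ is a regular value of $\overline f$. Fixing a $G$-invariant Milnor ball $B\subset\CC^3$ for $f$ and putting $\mathcal Y:=h^{-1}(B/G)$, the map $\widehat f\colon\mathcal Y\to D$ is proper over a small disc $D$ with $Z=\widehat f^{-1}(1)\subset\mathcal Y$. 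By \cite{BTW1} (and \cite{BTW2}, \cite{We}, \cite{BT} in the unimodal cases) the charts covering $Y$ may be chosen so that $\widehat f$ is a submersion on $\mathcal Y$ away from a single critical point $p\in Y_1\cong\CC^3$, near which $\widehat f$ agrees with $\overline f$; thus $\overline f$ has an isolated singularity at $p$, whose Milnor fibre $V_{\overline f}=\overline f^{-1}(1)\cap B_p$ is an open subset of $Z$, for a small ball $B_p\ni p$ in $Y_1$.

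The heart of the argument is a localisation. From the exact sequence defining $\Lambda^{(A)}_{f,G}$, its underlying group is the free part of ${\rm Im}\bigl(H_3(\mathcal Y,Z;\ZZ)\to H_2(Z;\ZZ)\bigr)=\ker\bigl(H_2(Z;\ZZ)\to H_2(\mathcal Y;\ZZ)\bigr)$, i.e.\ the $2$-cycles of $Z$ that become trivial in the total space. Since $\widehat f$ is a proper submersion on $\mathcal Y\setminus\{p\}$ — and also along the Milnor spheres $\partial(B/G)$ and $\partial B_p$ — Ehresmann's theorem trivialises it over $D$ away from $B_p$, so that $H_\ast$ of the pairs $(\mathcal Y\setminus{\rm int}\,B_p,\,Z\setminus{\rm int}\,B_p)$ and $(\partial B_p,\,\partial B_p\cap Z)$ vanish. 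A relative Mayer--Vietoris argument for $\mathcal Y=(\mathcal Y\setminus{\rm int}\,B_p)\cup B_p$ then gives
\[
H_\ast(\mathcal Y,Z;\ZZ)\ \cong\ H_\ast(B_p,B_p\cap Z;\ZZ)\ \cong\ \widetilde H_{\ast-1}(V_{\overline f};\ZZ),
\]
which is concentrated in degree $3$, where it equals the Milnor lattice $H_2(V_{\overline f};\ZZ)$. Comparing the connecting map of $(\mathcal Y,Z)$ with that of $(B_p,B_p\cap Z)$ — the latter being the classical isomorphism $H_3(B_p,B_p\cap Z)\xrightarrow{\ \sim\ }\widetilde H_2(V_{\overline f})$ — identifies ${\rm Im}\bigl(H_3(\mathcal Y,Z;\ZZ)\to H_2(Z;\ZZ)\bigr)$ with the image of $H_2(V_{\overline f};\ZZ)$ in $H_2(Z;\ZZ)$ and shows that this inclusion is injective.

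It remains to match the bilinear forms. Each class of $H_2(V_{\overline f};\ZZ)$ is carried by a cycle supported in a compact subset of the open surface $V_{\overline f}\subset Z$, so its intersection numbers with other such classes are the same computed in $V_{\overline f}$ or in $Z$; hence $H_2(V_{\overline f};\ZZ)\hookrightarrow H_2(Z;\ZZ)$ is an isometric embedding. Combining this with the previous step yields an isomorphism of lattices $\Lambda^{(A)}_{f,G}\cong H_2(V_{\overline f};\ZZ)$, the Milnor lattice of $\overline f$, and for $G=\{{\rm id}\}$ this specialises correctly to Proposition~\ref{prop:facts}~(i).

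The main obstacle is the bookkeeping in this localisation. Because $\widehat f$ is not proper over all of $\CC$, the excision must be carried out inside the resolved Milnor ball $\mathcal Y$, and one then has to check that replacing $\mathcal Y$ by the global $Y$ does not enlarge the kernel $\ker(H_2(Z)\to H_2(\mathcal Y))$ — equivalently, that the exceptional locus of $h$ contributes no further vanishing cycles. It is precisely here that the restriction to the classified families of \cite{BTW1} and \cite[Table~3]{BTW2} is used, both to guarantee that ${\rm crit}(\widehat f)$ reduces to the single point $p$ and to have the explicit chart structure available; in practice the identification can also be made case by case from the normal forms of $\overline f$, with Proposition~\ref{prop:facts}~(ii) and (iii) fixing the rank and signature of $\Lambda^{(A)}_{f,G}$.
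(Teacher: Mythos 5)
There is a genuine gap, and it begins on your first line: for the groups covered by this proposition, $0$ is \emph{not} in general the only singular point of $X=\CC^3/G$. Any $g\in G$ with $n_g=1$ fixes a line through the origin, so the singular locus of $X$ is one-dimensional and meets $W=f_X^{-1}(1)$; consequently $h$ is not an isomorphism over $W$, and $Z\to W$ contracts exceptional curves. These curves are not a technicality: by (\ref{eq:H3}) they account for the summand $\bigoplus_{{\rm age}(g)=1,\,n_g=1}\widetilde H^0(V^g;\QQ)^G(-1)$ of $H^3(Y,Z;\QQ)$, i.e.\ for genuine classes of $\Lambda^{(A)}_{f,G}$ that live on the exceptional locus of $Z\to W$ rather than on the strict transform of $W$ (all the pairs of Table~\ref{Tab8} have $n_3\ge 2$, hence such elements). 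So the premises ``$Z$ is identified with $W$'' and ``smooth near the point of interest'' have to be discarded, and with them the intuition that everything of interest is concentrated at a single point of the strict transform.

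Even granting that ${\rm crit}(\widehat f)$ is the single point $p$ (which is what \cite{BTW1}, \cite{BTW2} provide), the excision $H_\ast(\mathcal Y,Z)\cong H_\ast(B_p,B_p\cap Z)$ is exactly the step you leave open: it requires properness of $\widehat f$ over a disc containing both the critical value $0$ and the value $1$, transversality of the fibres to $\partial\mathcal Y=h^{-1}(\partial B/G)$ --- which passes through the exceptional locus over the one-dimensional singular stratum --- and finally a comparison of $\ker\bigl(H_2(Z)\to H_2(\mathcal Y)\bigr)$ with $\ker\bigl(H_2(Z)\to H_2(Y)\bigr)$. You flag these as ``bookkeeping'' but do not resolve them, and they are where the content lies. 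The paper's proof avoids the localisation entirely: it uses only the injectivity of $H_2(Z_1;\ZZ)\to H_2(Z;\ZZ)$ for $Z_1=Z\cap Y_1$ (a Mayer--Vietoris statement) together with the rank equality ${\rm rank}\,H_3(Y,Z;\ZZ)={\rm rank}\,H_2(Z_1;\ZZ)$, which is deduced from $E(f,G)(1,1)=E(\overline f,\{1\})(1,1)$ and $j_G=0$ proved in \cite{BTW1} and \cite{BTW2}. Your argument never invokes this equality of E-functions; it is the essential numerical input that substitutes for the surjectivity your unproved excision was meant to deliver. To keep the topological route you would have to carry out the excision in full (including the exceptional curves over the one-dimensional fixed loci); otherwise you must import the rank count as the paper does.
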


\begin{proof} Let $Z_1:=Z \cap Y_1$. Due to the long exact homology sequence and the Mayer--Vietoris sequence, there exists the following commutative diagram
\[ 
\xymatrix{
0 \ar[r] & H_3(Y,Z;\ZZ) \ar[r]^{\iota}  & H_2(Z; \ZZ)  \\
0 \ar[r] & H_3(Y_1,Z_1; \ZZ) \ar[r]^{\cong} \ar[u] & H_2(Z_1; \ZZ)  \ar[u]\\
 & 0 \ar[u] & 0 \ar[u]
 } 
\]
By \cite[Theorem~6.3.7]{BTW1} and \cite[Theorem~1]{BTW2}, we have
\[
E(f,G)(1,1) = E(\overline{f},\{ 1 \})(1,1), \quad j_G=0,
\]
which implies, by the proof of Proposition~\ref{prop:facts},
\[ {\rm rank}\, H_3(Y,Z;\ZZ) = {\rm rank} \, H_2(Z_1;\ZZ).
\]
Hence, the homomorphism
\[
H_2(Z_1;\ZZ) \to ({\rm Im}(H_3(Y,Z;\ZZ) \stackrel{\iota}{\hookrightarrow} H_2(Z;\ZZ)))_{\rm free}
\]
is an isomorphism, where, for an abelian group $H$,  $H_{\rm free}$ denotes the free part of the group $H$. Since the homomorphism $H_2(Z_1;\ZZ) \to H_2(Z; \ZZ)$ is compatible with the intersection forms on these groups, the lattice $\Lambda^{(A)}_{f,G}$ is isomorphic to the Milnor lattice of $\overline{f}$.
\end{proof}

Let $f(x_1,x_2,x_3) = \sum_{i=1}^3 c_i \prod_{j=1}^3 x_j^{E_{ij}}$ and let $W_f=(a_1,a_2,a_3;d)$ be the canonical system of weights of $f$, namely the unique solution of the equation
\begin{equation*}
E
\begin{pmatrix}
a_1\\
\vdots\\
a_n
\end{pmatrix}
={\rm det}(E)
\begin{pmatrix}
1\\
\vdots\\
1
\end{pmatrix}
,\quad 
d:={\rm det}(E).
\end{equation*}
Let 
\[
c_f:={\rm gcd}(a_1,a_2,a_3,d).
\]
By \cite[Proposition~3.1]{ET1}, we have $c_f=|\widetilde{G}_0|=|G_f/G_0|$.
The number
\[
a_{W_f} := d- a_1-a_2-a_3
\]
is called the {\em Gorenstein parameter} of $W_f$.  The reduced system of weights 
\[
W_f^{\rm red}=\left(\frac{a_1}{c_f},\frac{a_2}{c_f},\frac{a_3}{c_f};\frac{d}{c_f}\right)
\]
is a regular system of weights in the sense of K.~Saito \cite{Sa1,Sa2}. Then the usual Gorenstein parameter is 
\[
\eps_{W_f} := - \frac{a_{W_f}}{c_f}.
\]

There are 14+8 pairs $(f,G)$ of invertible polynomials $f$ giving regular systems of weights with $\eps_{W_f}=-1$ and subgroups $G \subset G_f$  with $G_0 \subset G$ such that $j_{\widetilde{G}}=0$ (cf., e.g., \cite{KST2}). The first 14 cases define the 14 exceptional unimodal singularities. For these systems of weights we can find invertible polynomials $f$ with $G_0=G_f$. The Berglund-H\"ubsch duality $f \leftrightarrow \widetilde{f}$ is just Arnold's strange duality and $\widetilde{G}_0=\{ {\rm id} \}$. For the remaining 8 cases we have indicated invertible polynomials and their duals in Table~\ref{Tab8}. We have also indicated the names of the singularities in Arnold's classification and the groups $\widetilde{G}_0 \simeq  G_f/G_0$.
\begin{table}[h]
\begin{center}
\begin{tabular}{cccccc}
\hline
$a_1,a_2,a_3;d$ & Name & $f$   & $\widetilde{f}$ & $G_f/G_0$ & $\alpha_1, \ldots ,\alpha_m$ \\
\hline
$2,6,9;18$ & $J_{3,0}$ &  $x^6y+y^3+z^2$ & $x^6+xy^3+z^2$ & $\ZZ/2\ZZ$  & $2,2,2,3$ \\
$2,4,7;14$ & $Z_{1,0}$  & $x^5y + xy^3 +z^2$ & $x^5y +xy^3 + z^2$ & $\ZZ/2\ZZ$ & $2,2,2,4$ \\
$2,4,5;12$ & $Q_{2,0}$ & $x^4y + y^3 + xz^2$ & $x^4z+xy^3+z^2$  & $\ZZ/2\ZZ$ & $2,2,2,5$\\
$2,3,6;12$ & $W_{1,0}$ & $x^6+y^2+yz^2$  & $x^6+y^2z+z^2$ & $\ZZ/2\ZZ$ & $2,2,3,3$ \\
$2,3,4;10$ & $S_{1,0}$  & $x^5+xy^2+yz^2$ & $x^5y+y^2z+z^2$ & $\ZZ/2\ZZ$ & $2,2,3,4$\\
$2,3,3;9$  & $U_{1,0}$ & $x^3+xy^2+yz^3$  &  $x^3y+y^2z+z^3$ & $\ZZ/2\ZZ$ & $2,3,3,3$  \\
$2,2,5;10$ & $NA^1_{0,0}$ & $x^5+y^5+z^2$ & $x^5+y^5+z^2$ & $\ZZ/5\ZZ$ & $2,2,2,2,2$ \\
$2,2,3;8$ & $VNA^1_{0,0}$ & $x^4+y^4+yz^2$ & $x^4+y^4z+z^2$ & $\ZZ/4\ZZ$ & $2,2,2,2,3$ \\
\hline
\end{tabular}
\end{center}
\caption{Regular systems of weights with $\eps_W=-1$ and $G_0 \neq G_f$} \label{Tab8}
\end{table}

\begin{figure}
$$
\xymatrix{ & & & {\cdots} & & & \\
& & & *{\bullet} \ar@{-}[d] \ar@{}_{\delta_2}[l] & & & \\
 & & & *{\bullet}  \ar@{==}[d] \ar@{-}[dr]  \ar@{-}[ldd] \ar@{-}[rdd] \ar@{}_{\delta_1}[l]
 & & &  \\
 *{\bullet} \ar@{-}[r] \ar@{}_{\delta^2_1}[d]  & {\cdots} \ar@{-}[r]  & *{\bullet} \ar@{-}[r] \ar@{-}[ur]   \ar@{}_{\delta^2_{\alpha_2-1}}[d] & *{\bullet} \ar@{-}[dl]  \ar@{-}[dr] \ar@{-}[r]  \ar@{}^{\delta_{0}}[l]  & *{\bullet} \ar@{-}[r]  \ar@{}^{\delta^{m-1}_{\alpha_{m-1}-1}}[d]  & {\cdots} \ar@{-}[r]  &*{\bullet} \ar@{}^{\delta^{m-1}_1}[d]   \\
& &  *{\bullet} \ar@{-}[dl] \ar@{}_{\delta^1_{\alpha_1-1}}[r]  &  & *{\bullet} \ar@{-}[dr] \ar@{}^{\delta^m_{\alpha_m-1}}[l] &  &  \\
 & {\cdots} \ar@{-}[dl] & &  & & {\cdots} \ar@{-}[dr] & \\
*{\bullet}  \ar@{}_{\delta^1_1}[r] & & &  & & & *{\bullet}  
\ar@{}^{\delta^r_1}[l]
  } 
$$
\caption{The graph $T^+_{\alpha_1,\ldots ,\alpha_m}$} \label{FigT+}
\end{figure}
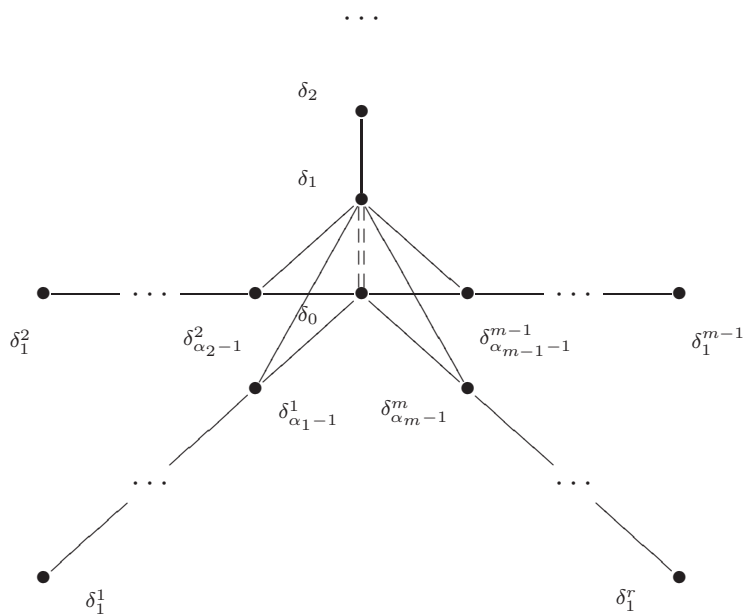

\begin{proposition} \label{prop:8}
For each of the Landau--Ginzburg orbifolds $(\widetilde{f}, \widetilde{G}_0)$ of Table~\ref{Tab8}, there exists a basis of $\Lambda^{(A)}_{\widetilde{f}, \widetilde{G}_0} \otimes \QQ$ with a Coxeter--Dynkin diagram in the shape of Fig.~\ref{FigT+}, where the numbers $\alpha_1, \ldots , \alpha_m$ are given in Table~\ref{Tab8}.
\end{proposition}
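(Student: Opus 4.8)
## Proof Proposal

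The plan is to realize $\Lambda^{(A)}_{\widetilde{f}, \widetilde{G}_0} \otimes \QQ$ explicitly as the (rationalized) Milnor lattice of the function $\overline{f}$ obtained from the partial crepant resolution construction, and then to compute a Coxeter--Dynkin diagram for $\overline{f}$ directly. By the preceding proposition, for each of the eight cases in Table~\ref{Tab8} the lattice $\Lambda^{(A)}_{\widetilde{f}, \widetilde{G}_0}$ is isomorphic to the Milnor lattice of $\overline{f}$, where $\overline{f}$ is the restriction of the induced function $\widehat f$ to the single chart $Y_1 \cong \CC^3$ containing the singularities. So the problem reduces to computing, for each case, a distinguished basis of vanishing cycles for $\overline{f}$ and the intersection numbers among them. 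The key input is that these eight singularities are precisely the bimodal (or related) series singularities $J_{3,0}, Z_{1,0}, Q_{2,0}, W_{1,0}, S_{1,0}, U_{1,0}, NA^1_{0,0}, VNA^1_{0,0}$ from Arnold's classification, for which Coxeter--Dynkin diagrams with respect to suitable distinguished bases are already known in the literature (Gabrielov, Ebeling), and these have exactly the $T$-shaped (pinwheel) form of Fig.~\ref{FigT+}.

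First I would, for each of the eight rows, identify the explicit invertible polynomial $\widetilde f$ and the group $\widetilde G_0 \simeq G_f/G_0$ from Table~\ref{Tab8}, run the $G\text{-}{\rm Hilb}(\CC^3)$ construction of Section~5 (citing \cite{BTW1, BTW2, We, BT} for the chart structure), and write down $\overline{f}$ as a polynomial in three variables; this identifies $\overline{f}$ with a normal form of the named singularity (or a stably equivalent one). Second, I would invoke the known Coxeter--Dynkin diagrams: for the 14 exceptional unimodal and the bimodal series, Gabrielov and Ebeling computed distinguished bases whose diagrams are the graphs $T_{p,q,r}$ resp.\ their one-parameter series generalizations $T^+_{\alpha_1,\dots,\alpha_m}$, and the integers $\alpha_1,\dots,\alpha_m$ in the last column of Table~\ref{Tab8} are read off from the weight system $(a_1,a_2,a_3;d)$ via the standard recipe (e.g.\ the $\alpha_i$ are determined by the Dolgachev/Gabrielov numbers of the regular system of weights with $\eps_W = -1$). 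Third, I would perform a braid-group / Gabrielov transformation on the a priori distinguished basis to bring the diagram into the precise pinwheel shape of Fig.~\ref{FigT+}, checking that the central vertex $\delta_0$, the arm lengths $\alpha_1,\dots,\alpha_m$, and the tail edges $\delta_1,\delta_2,\dots$ match; alternatively, cite the place in the literature where this exact normal form is already established.

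The main obstacle I expect is matching the arm lengths $\alpha_i$ coming from the weight data in Table~\ref{Tab8} with the arm lengths of the diagram obtained from $\overline{f}$ via the resolution; a priori $\overline{f}$ need only be \emph{stably equivalent} to Arnold's normal form, and the resolution can introduce extra vanishing cycles from the exceptional locus that must be tracked and absorbed (this is essentially the content of the equality $E(\widetilde f, \widetilde G_0)(1,1) = E(\overline f,\{1\})(1,1)$ and $j_{\widetilde G}=0$ used in the previous proof, which guarantees the ranks agree but not the geometric diagram). A clean way around this is to go case-by-case: verify that the Milnor number ${\rm rank}\,\Lambda^{(A)}_{\widetilde f,\widetilde G_0}$ computed from Proposition~\ref{prop:facts}(ii) equals $3 + \sum_{i=1}^m(\alpha_i - 1)$ (the number of vertices of $T^+_{\alpha_1,\dots,\alpha_m}$ plus the extra tail vertices $\delta_1,\delta_2$), confirm the signature via Proposition~\ref{prop:facts}(iii) matches the known signature of the $T^+$-lattice, and then appeal to the fact that for these singularities the lattice together with a monodromy/Coxeter element determines the diagram up to the allowed Gabrielov transformations. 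I would then tabulate the resulting $\alpha_1,\dots,\alpha_m$ and observe they coincide with the last column of Table~\ref{Tab8}, completing the proof.
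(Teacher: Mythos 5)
There is a genuine gap: your opening reduction is not available. The preceding proposition (identifying $\Lambda^{(A)}_{f,G}$ with the Milnor lattice of $\overline{f}$ on the distinguished chart $Y_1$) is proved only for $f$ defining an ADE singularity or one of the exceptional unimodal singularities of \cite[Table~3]{BTW2}, because the concentration of all singularities of $\widehat{f}$ in a single chart of $G\text{-}{\rm Hilb}(\CC^3)$ is a nontrivial input established case by case in \cite{BTW1,BTW2,We,BT}. The eight pairs $(\widetilde{f},\widetilde{G}_0)$ of Table~\ref{Tab8} are not all of this type: for $Z_{1,0}$ one has $\widetilde{f}=x^5y+xy^3+z^2$ (self-dual, $\mu=15$, a bimodal singularity), and for $NA^1_{0,0}$ one has $\widetilde{f}=x^5+y^5+z^2$ with $\mu=16$; neither is ADE nor exceptional unimodal. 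So you cannot quote that proposition to replace $\Lambda^{(A)}_{\widetilde{f},\widetilde{G}_0}$ by the Milnor lattice of some $\overline{f}$, and the first step of your plan collapses. The paper instead works on the quotient $W=V_{\widetilde{f}}/\widetilde{G}_0$: it takes an explicit distinguished basis of vanishing cycles of $\widetilde{f}$ itself (via A'Campo--Gusein-Zade real morsifications or Gabrielov's join construction), pushes it forward by $(\pi|_V)_\ast$ to $H_2(W;\QQ)$, performs an explicit, Sage-assisted base change to reach the auxiliary diagram of Fig.~\ref{FigbarTpqr}, and only then invokes the construction of \cite{ET3} to pass from $H_2(W;\QQ)$ to $\Lambda^{(A)}_{\widetilde{f},\widetilde{G}_0}\otimes\QQ$ and to the shape of Fig.~\ref{FigT+}.

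Your fallback argument does not close the gap either. Checking that the rank of $\Lambda^{(A)}_{\widetilde{f},\widetilde{G}_0}$ and its signature agree with those of the $T^+_{\alpha_1,\ldots,\alpha_m}$-lattice does not yield a basis with the prescribed Gram matrix even over $\QQ$: rank and signature do not determine a rational quadratic form up to equivalence, and in any case equivalence of forms is weaker than exhibiting the specific basis the proposition asserts. There is also no general principle that ``the lattice together with a monodromy operator determines the diagram up to Gabrielov transformations''; the proposition concerns an arbitrary basis of $\Lambda^{(A)}\otimes\QQ$, not a distinguished basis of vanishing cycles, so braid-group equivalence is not even the relevant notion there. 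The substance of the proposition is the explicit construction of the basis (the pushed-forward cycles $\overline{\delta}'_i$ and the concrete transformation matrices), which your proposal does not supply.
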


\begin{remark} The numbers $\alpha_1, \ldots , \alpha_m$ are the Dolgachev numbers of $f$ which correspond to the Gabrielov numbers of $\widetilde{f}$, see \cite{ET1}.
\end{remark}

\begin{proof}[Proof of Proposition~\ref{prop:8}]
To construct a basis of $\Lambda^{(A)}_{\widetilde{f}, \widetilde{G}_0} \otimes \QQ$ with the required Coxeter--Dynkin diagram, we shall follow the method of \cite{ET3}. We first construct a basis of $H_2(W; \QQ)$ as follows. Consider a pair $(\widetilde{f}, \widetilde{G}_0)$ from Table~\ref{Tab8}. We have chosen the coordinates in such a way that the group $\widetilde{G}_0$ acts on the coordinates $x,y$ and leaves the coordinate $z$ invariant. For $i=1,2,3$, let $K_i$ be the maximal subgroup of $\widetilde{G}_0$ fixing the $i$-th coordinate $x_i$. Denote the order $|K_i|$ of $K_i$ by $n_i$. Then we have $n_1=n_2=1$. Let $\gamma_1', \gamma_2', \gamma_3'$ be the Gabrielov numbers of the pair $(\widetilde{f}, \{ {\rm id} \})$ defined in \cite{ET0}. For $i=1,2,3$, set
\begin{equation*}
\gamma_i:=\frac{\gamma'_i}{\left|G/K_i\right|}.
\end{equation*}
Then $\gamma_1, \gamma_2$ and $\gamma_3$ repeated $n_3$ times are the Gabrielov numbers $\Gamma_{(\widetilde{f}, \widetilde{G}_0)}$ of the pair $(\widetilde{f}, \widetilde{G}_0)$ (see \cite{ET1}). (Note that we omit numbers which are equal to one in $\Gamma_{(\widetilde{f}, \widetilde{G}_0)}$.)
The values of $\gamma_1', \gamma_2', \gamma_3'$, $\gamma_1, \gamma_2, \gamma_3$, and $n_3$ for the pairs $(\widetilde{f}, \widetilde{G}_0)$ are indicated in Table~\ref{TabGab}. Then we shall show that there exists a basis
\[
\overline{\mathcal B}= ( \overline{\delta}_0, \overline{\delta}_1, \overline{\delta}_2; \overline{\delta}^1_1, \ldots, \overline{\delta}^1_{\gamma_1-1}; \overline{\delta}^2_1, \ldots, \overline{\delta}^2_{\gamma_2-1}; \overline{\delta}_1^3, \ldots, \overline{\delta}_{\gamma_3-1}^3 )
\]
of $H_2(W; \QQ)$ with the Coxeter--Dynkin diagram indicated in Fig.~\ref{FigbarTpqr}. (Here, if $\gamma_i=1$, then there is no branch corresponding to $i$.) As, in \cite{ET3}, one can construct from this basis a basis of $\Lambda^{(A)}_{\widetilde{f}, \widetilde{G}_0} \otimes \QQ$ with a Coxeter--Dynkin diagram in the shape of Fig.~\ref{FigT+}.
\begin{table}[h]
\begin{center}
\begin{tabular}{cccccc}
\hline
 Name  & $\widetilde{f}$ & $\gamma_1', \gamma_2', \gamma_3'$ & $\gamma_1, \gamma_2, \gamma_3$ & $n_3$ & $\Gamma_{(\widetilde{f}, \widetilde{G}_0)}$\\
\hline
 $J_{3,0}$  & $x^6+xy^3+z^2$ & $4,6,2$  & $2,3,2$ & 2 & $2,3,2,2$\\
 $Z_{1,0}$  & $x^5y +xy^3 + z^2$ & $4,8,2$ & $2,4,2$ & 2 & $2,4,2,2$\\
 $Q_{2,0}$  & $x^4z+xy^3+z^2$  & $4,10,2$ & $2,5,2$ & 2 & $2,5,2,2$\\
 $W_{1,0}$   & $x^6+y^2z+z^2$ & $6,6,2$ & $3,3,2$ & 2 & $3,3,2,2$ \\
 $S_{1,0}$   & $x^5y+y^2z+z^2$ & $6,8,2$ & $3,4,2$ & 2 & $3,4,2,2$ \\
 $U_{1,0}$   &  $x^3y+y^2z+z^3$ & $4,6,3$ & $2,3,3$ & 2 & $2,3,3,3$ \\
 $NA^1_{0,0}$  & $x^5+y^5+z^2$ & $5,5,2$ & $1,1,2$ & 5 & $2,2,2,2,2$\\
 $VNA^1_{0,0}$  & $x^4+y^4z+z^2$ & $4,12,2$ & $1,3,2$ & 4 & $3,2,2,2,2$\\
\hline
\end{tabular}
\end{center}
\caption{Gabrielov numbers} \label{TabGab}
\end{table}

\begin{figure}
$$
\xymatrix{ 
& & & *{\bullet} \ar@{-}[d] \ar@{}_{\overline{\delta}_2}[l] & & & \\
 & & & *{\bullet}  \ar@{==}[d] \ar@{-}[dr]^{n_3}  \ar@{-}[ldd]  \ar@{}_{\overline{\delta}_1}[l]
 & & &  \\
*{\bullet} \ar@{-}[r] \ar@{}_{\overline{\delta}^2_{\gamma_2-1}}[d]  & {\cdots} \ar@{-}[r]  & *{\bullet} \ar@{-}[r] \ar@{-}[ur]    \ar@{}_{\overline{\delta}^2_1}[d] & *{\bullet} \ar@{-}[dl] \ar@{-}[r]^{n_3}  \ar@{}^{\overline{\delta}_{0}}[l]  &  *+=[o][F-:<3pt>]\txt{\ $\scriptstyle{-2n_3}$\,\, } \ar@{-}[r]^{n_3}  \ar@{}^{\overline{\delta}_1^3}[d]  & {\cdots} \ar@{-}[r]^{n_3}  & *+=[o][F-:<3pt>]\txt{\ $\scriptstyle{-2n_3}$\,\,} \ar@{}^{\overline{\delta}_{\gamma_3-1}^3}[d]   \\
& &  *{\bullet} \ar@{-}[dl] \ar@{}_{\overline{\delta}^1_1}[r]   & & & &  \\
 & {\cdots} \ar@{-}[dl] & & & & & \\
 *{\bullet}   \ar@{}_{\overline{\delta}^1_{\gamma_1-1}}[r]& & & & & &
  }
$$
\caption{Coxeter--Dynkin diagram corresponding to $\overline{\mathcal B}$}  \label{FigbarTpqr}
\end{figure}
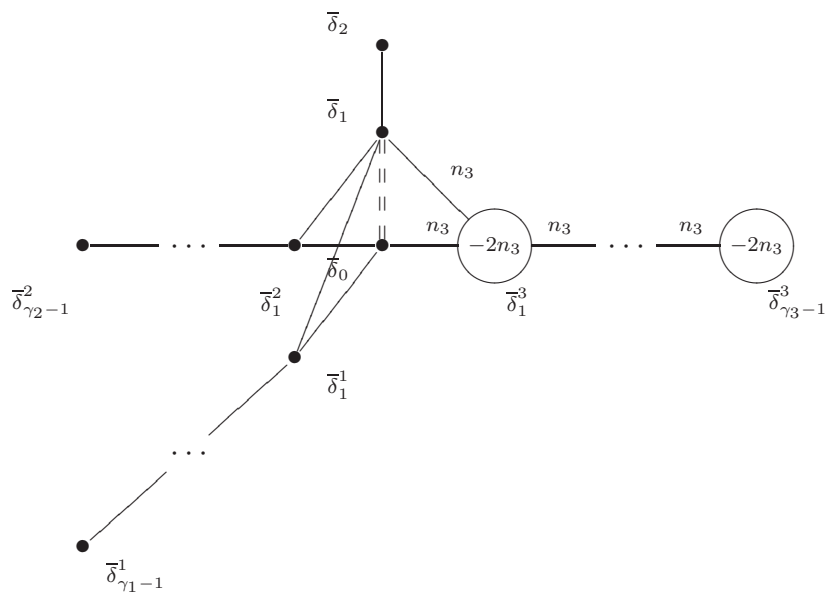

We now indicate for each case a basis $\overline{\mathcal B}$. All the singularities except $U_{1,0}$ can be given by equations of the form $h(x,y)+z^2=0$. Therefore it suffices in these cases to indicate a Coxeter--Dynkin diagram for a distinguished basis of vanishing cycles for the plane curve singularity defined by $h(x,y)$.

$J_{3,0}$, $Z_{1,0}$, $Q_{2,0}$, $W_{1,0}$, $S_{1,0}$: Here we only treat the case $J_{3,0}$. Coxeter--Dynkin diagrams for the other cases can be given by extensions of the Coxeter--Dynkin diagram for $J_{3,0}$. (See \cite[Sect.~8, Example~2]{EG} for the case $Z_{1,0}$.) A Coxeter--Dynkin diagram with respect to a suitable real morsification of the function $h$ due to the method of N.~A'Campo and Gusein-Zade \cite{A'C,GZ1,GZ2} is given in Fig.~\ref{FigA}.
\begin{figure}
$$
\xymatrix{   & & & & & & \\
   &  *{\bullet} \ar@{-}[r] \ar@{-}[d]  \ar@{--}[dr] \ar@{}^{\delta_6}[u] & *{\bullet} \ar@{-}[d] \ar@{-}[r] \ar@{}^{\delta_5}[u] & *{\bullet} \ar@{-}[r]  \ar@{-}[d] \ar@{--}[dl] \ar@{--}[dr] \ar@{}^{\delta_2}[u]  &*{\bullet} \ar@{-}[d] \ar@{}^{\delta_{11}}[u]  & & \\
 & *{\bullet} \ar@{-}[r] \ar@{}_{\delta_3}[d] &   *{\bullet} \ar@{-}[r] \ar@{-}[r] \ar@{-}[d] \ar@{--}[dr] \ar@{}_{\delta_1}[d]  & *{\bullet} \ar@{-}[r] \ar@{-}[d] \ar@{}_{\delta_7}[d]  & *{\bullet} \ar@{-}[r] \ar@{-}[d] \ar@{--}[dl]\ar@{--}[dr] \ar@{}_{\delta_8}[d] & *{\bullet} \ar@{-}[d] \ar@{}_{\delta_{10}}[d] &  \\
  &  & *{\bullet} \ar@{-}[r] \ar@{}_{\delta_4}[d] & *{\bullet} \ar@{-}[r] \ar@{}_{\delta_9}[d] & *{\bullet} \ar@{-}[r] \ar@{}_{\delta_{12}}[d]  & *{\bullet} \ar@{}_{\delta_{13}}[d] & \\
  & & & & & & } 
$$
\caption{Coxeter--Dynkin diagram of $h(x,y)=x^6+xy^3$} \label{FigA}
\end{figure}
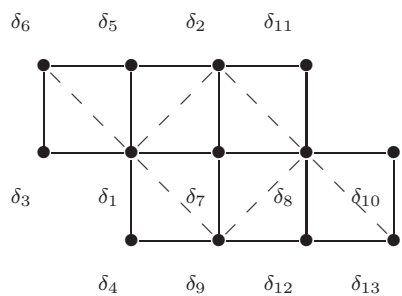
The group $\widetilde{G}_0$ acts by reflection at the central vertex $\delta_7$. Let
\[
\overline{\delta}'_i = (\pi|_V)_\ast(\delta_i) = (\pi|_V)_\ast(\delta_{i+7}) \quad \mbox{for } i=1, \ldots, 6, \quad \overline{\delta}'_7= (\pi|_V)_\ast(\delta_7),
\]
and let
\begin{eqnarray*}
\overline{\delta}_3 & = &  \overline{\delta}'_3 + \overline{\delta}'_1 +\overline{\delta}'_2+ \overline{\delta}'_4 +\overline{\delta}'_5 +\overline{\delta}'_7, \\
\overline{\delta}_6 & = & \overline{\delta}'_6+ \overline{\delta}'_5, \\
\overline{\delta}_i & = & \overline{\delta}'_i \quad \mbox{for }i=1,2,4,5,7.
\end{eqnarray*}
Then $\overline{\mathcal B} = ( \overline{\delta}_1, \ldots, \overline{\delta}_7)$ is a basis intersecting according to the graph of Fig.~\ref{FigbarTpqr}.

$NA^1_{0,0}$: Here the function $h$ is of the form $h(x,y)=x^5 + y^5$. A distinguished basis of vanishing cycles for this polynomial can be computed by the method of A.~M.~Gabrielov \cite{Ga1}. It is obtained as follows: Let $e_1,e_2, e_3, e_4$ be a distinguished basis of vanishing cycles for $A_4$ ($x^5$) and $f_1, f_2, f_3, f_4$ a distinguished basis of vanishing cycles for 
$A_4$ ($y^5$). Then 
\begin{equation} \label{eq:Gab}
\delta_{ij} = e_i \otimes f_j
\end{equation}
is a distinguished basis of vanishing cycles for $h$. We extend these sets by $e_5:=-(e_1+e_2+e_3+e_4)$ and $f_5:=-(f_1 + f_2+f_3+f_4)$. We extend the definition (\ref{eq:Gab}) to $i=5$ and $j=5$ as well. One has
\[ \langle \delta_{ij}, \delta_{ij} \rangle =-2, \quad \langle \delta_{ij}, \delta_{i+1,j} \rangle =\langle \delta_{ij}, \delta_{i,j+1} \rangle =1, \quad \langle \delta_{ij}, \delta_{i+1,j+1} \rangle=-1
\]
and $\langle \delta_{ij}, \delta_{i'j'} \rangle =0$ otherwise, where $i+1=1$ for $i=5$ and $j+1=1$ for $j=5$ (cf. \cite[Sect.~8, Example~3]{EG}). The group $\widetilde{G}_0$ is generated by the matrix $g={\rm diag}(\zeta,\zeta^{-1})$ where $\zeta$ is a primitive 5-th root of unity. Then the generator $g$ acts as follows:
\[ g(e_i)=e_{i+1} \ \mbox{mod} \, 5, \quad g(f_i)=f_{i-1} \ \mbox{mod} \, 5.
\]
Let 
\begin{eqnarray*}
\overline{\delta}'_1 & = & (\pi|_V)_\ast(\delta_{11})=(\pi|_V)_\ast(\delta_{25}) =(\pi|_V)_\ast(\delta_{34})=(\pi|_V)_\ast(\delta_{43})=(\pi|_V)_\ast(\delta_{52}), \\
\overline{\delta}'_2 & = & (\pi|_V)_\ast(\delta_{12})= (\pi|_V)_\ast(\delta_{21})= (\pi|_V)_\ast(\delta_{35})= (\pi|_V)_\ast(\delta_{44})= (\pi|_V)_\ast(\delta_{53}), \\
\overline{\delta}'_3  & = & (\pi|_V)_\ast(\delta_{13})=(\pi|_V)_\ast(\delta_{22})=(\pi|_V)_\ast(\delta_{31})=(\pi|_V)_\ast(\delta_{45})=(\pi|_V)_\ast(\delta_{54}), \\
\overline{\delta}'_4 & = & (\pi|_V)_\ast(\delta_{14})=(\pi|_V)_\ast(\delta_{23})=(\pi|_V)_\ast(\delta_{32})=(\pi|_V)_\ast(\delta_{41})=(\pi|_V)_\ast(\delta_{55}).
\end{eqnarray*}
Using the Sage Mathematics software \cite{Sage}, one can compute that the following basis transformation
\[ \left( \begin{array}{c} \overline{\delta}_0 \\ \overline{\delta}_1 \\ \overline{\delta}_2 \\ \overline{\delta}^3_1 \end{array} \right) = 
\left( \begin{array}{cccc}  0 & 1 & 2 & 0 \\-1 & 0 & 1 & 0 \\ 1 & 0 & 0 & 0 \\ -1 & -2 & -3 & 1 \end{array} \right) \left( \begin{array}{c} \overline{\delta}'_1 \\ \overline{\delta}'_2 \\\overline{\delta}'_3 \\ \overline{\delta}'_4 \end{array} \right)
\]
gives a basis with the required Coxeter--Dynkin diagram.

$VNA^1_{0,0}$: By a change of coordinates, $\widetilde{f}$ can be given by $h(x,y)+z^2$ with $h(x,y)=x^4+y^8$. By the same method as in the case $NA^1_{0,0}$, we obtain a basis
\begin{eqnarray*}
\overline{\delta}'_1 & = & (\pi|_V)_\ast(\delta_{11})=(\pi|_V)_\ast(\delta_{27}) =(\pi|_V)_\ast(\delta_{35})=\pi|_V)_\ast(\delta_{43}) \\
\overline{\delta}'_2 & = & (\pi|_V)_\ast(\delta_{12})= (\pi|_V)_\ast(\delta_{28})= (\pi|_V)_\ast(\delta_{36})= (\pi|_V)_\ast(\delta_{44}) \\
\overline{\delta}'_3  & = & (\pi|_V)_\ast(\delta_{14})=(\pi|_V)_\ast(\delta_{22})=(\pi|_V)_\ast(\delta_{38})=(\pi|_V)_\ast(\delta_{46}) \\
\overline{\delta}'_4 & = & (\pi|_V)_\ast(\delta_{15})=(\pi|_V)_\ast(\delta_{23})=(\pi|_V)_\ast(\delta_{31})=(\pi|_V)_\ast(\delta_{47}) \\
\overline{\delta}'_5 & = & (\pi|_V)_\ast(\delta_{17})=(\pi|_V)_\ast(\delta_{25})=(\pi|_V)_\ast(\delta_{33})=(\pi|_V)_\ast(\delta_{41}) \\
\overline{\delta}'_6 & = & (\pi|_V)_\ast(\delta_{18})=(\pi|_V)_\ast(\delta_{26})=(\pi|_V)_\ast(\delta_{34})=(\pi|_V)_\ast(\delta_{42}) \\
\end{eqnarray*}
Again using Sage, one can compute that the following basis transformation
\[ 
\left( \begin{array}{c} \overline{\delta}_0 \\ \overline{\delta}_1 \\ \overline{\delta}_2 \\ \overline{\delta}^2_1 \\ \overline{\delta}^2_2 \\ \overline{\delta}^3_1 \end{array} \right) = 
\left( \begin{array}{cccccc}  
0 & -3 & 0 & 3 & 2 & 0 \\
0 & 0 & 1 & 0 & 0 & 0\\
0 & -2 & -1 & 2 & 1 &0  \\ 
-1 & 0 & 0 & 0 & 0 & 0 \\
1 & 1 & 0 & 0 & 0 & 0 \\
3 & 10 & -1 & -11 & -6 & 1 \end{array} \right) 
\left( \begin{array}{c} \overline{\delta}'_1 \\ \overline{\delta}'_2 \\\overline{\delta}'_3 \\ \overline{\delta}'_4 \\ \overline{\delta}'_5 \\ \overline{\delta}'_6 \end{array} \right)
\]
gives a basis with the required Coxeter--Dynkin diagram.

$U_{1,0}$: Here we use a Coxeter--Dynkin diagram which is obtained by the method of \cite[Theorem~1]{Ga2}. This is the graph depicted in Fig.~\ref{FigU10}.
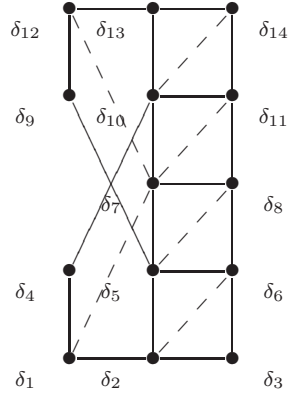
\begin{figure}
$$
\xymatrix{  
   &  *{\bullet} \ar@{-}[r]  \ar@{-}[d] \ar@{--}[ddr] \ar@{}^{\delta_{12}}[l] & *{\bullet} \ar@{-}[d] \ar@{-}[r] \ar@{}^{\delta_{13}}[l] & *{\bullet}  \ar@{-}[d] \ar@{--}[dl]  \ar@{}_{\delta_{14}}[r]  & \\
 & *{\bullet} \ar@{-}[ddr]  \ar@{}^{\delta_9}[l] &   *{\bullet} \ar@{-}[r]  \ar@{-}[d] \ar@{}^{\delta_{10}}[l]  & *{\bullet}  \ar@{-}[d] \ar@{--}[dl] \ar@{}_{\delta_{11}}[r]  &  \\
  & & *{\bullet} \ar@{-}[r] \ar@{-}[d] \ar@{}^{\delta_7}[l] & *{\bullet} \ar@{-}[d] \ar@{--}[dl]  \ar@{}_{\delta_8}[r]  &  \\
    &  *{\bullet} \ar@{-}[d] \ar@{-}[uur]   \ar@{}^{\delta_4}[l] & *{\bullet} \ar@{-}[d] \ar@{-}[r] \ar@{}^{\delta_5}[l] & *{\bullet}  \ar@{-}[d] \ar@{--}[dl]  \ar@{}_{\delta_6}[r]  & \\
    &  *{\bullet}  \ar@{-}[r]  \ar@{--}[uur]   \ar@{}^{\delta_1}[l] & *{\bullet}  \ar@{-}[r] \ar@{}^{\delta_2}[l] & *{\bullet}   \ar@{}_{\delta_3}[r]  &  } 
$$
\caption{Coxeter--Dynkin diagram of $U_{1,0}$} \label{FigU10}
\end{figure}
The action of the group $\widetilde{G}_0 \cong \ZZ/2\ZZ$ on the basis $(\delta_1, \ldots, \delta_{14})$ is seen from the following formulas:
\[
\begin{array}{ll} 
\overline{\delta}'_1= (\pi|_V)_\ast(\delta_4) = (\pi|_V)_\ast(\delta_9), & \overline{\delta}'_2= (\pi|_V)_\ast(\delta_1) = (\pi|_V)_\ast(\delta_{12}), \\
\overline{\delta}'_3= (\pi|_V)_\ast(\delta_2+\delta_3)=(\pi|_V)_\ast(\delta_{13}), & \overline{\delta}'_4= (\pi|_V)_\ast(\delta_2) = (\pi|_V)_\ast(\delta_{13}+\delta_{14}), \\
\overline{\delta}'_5=(\pi|_V)_\ast(\delta_5+\delta_6)= (\pi|_V)_\ast(\delta_{10}), & \overline{\delta}'_6= (\pi|_V)_\ast(\delta_5)=(\pi|_V)_\ast(\delta_{10}+\delta_{11}), \\
\overline{\delta}'_7= (\pi|_V)_\ast(\delta_7), & \overline{\delta}'_8= (\pi|_V)_\ast(\delta_8).
\end{array}
\]
Using Sage, one obtains the following basis transformation:
\[ 
\left( \begin{array}{c} \overline{\delta}_0 \\ \overline{\delta}_1 \\ \overline{\delta}_2 \\ \overline{\delta}^1_1 \\ \overline{\delta}^1_2 \\ \overline{\delta}^2_1 \\  \overline{\delta}^3_1 \\ \overline{\delta}^3_2 \end{array} \right) = 
\left( \begin{array}{cccccccc}  
-1 & 0 & 0 & -1 & 0 & -1 & 0 & 0\\
0  & 1 & 0 & 0 & 0 & 0 & 0 & 0\\
0 & 0 & 1 & 0 & 0 & 0 & 0 & 0\\ 
0 & -1 & 0 & -1 & 0 & 0 & 0 & 0\\
0 & 0 & 0 & 0 & 1 & -1 & 0 & 0\\
0 & 0 & 0 & 1 & 0 & 1 & 0 & 0 \\
2 & 0 & 0 & 2 & 0 & 2 & 1 & 0\\
2 & 2 & 2 & 0 & 2 & 0 & -1 & 1
\end{array} \right) 
\left( \begin{array}{c} \overline{\delta}'_1 \\ \overline{\delta}'_2 \\\overline{\delta}'_3 \\ \overline{\delta}'_4 \\ \overline{\delta}'_5 \\ \overline{\delta}'_6 \\ \overline{\delta}'_7 \\ \overline{\delta}'_8
\end{array} \right)
\]

\end{proof}

On the other hand, we consider the case $G_0 \subset G$. 

Let $f(x_1,\dots ,x_n)=\sum_{i=1}^nc_i\prod_{j=1}^nx_j^{E_{ij}}$ be an invertible polynomial. Consider the free abelian group $\oplus_{i=1}^n\ZZ\vec{x_i}\oplus \ZZ\vec{f}$ 
generated by the symbols $\vec{x_i}$ for the variables $x_i$ for $i=1,\dots, n$
and the symbol $\vec{f}$ for the polynomial $f$.
The {\em maximal grading} $L_f$ of the invertible polynomial $f$ 
is the abelian group defined by the quotient 
\[
L_f:=\bigoplus_{i=1}^n\ZZ\vec{x_i}\oplus \ZZ\vec{f}\left/I_f\right.,
\]
where $I_f$ is the subgroup generated by the elements 
\[
\vec{f}-\sum_{j=1}^nE_{ij}\vec{x_j},\quad i=1,\dots ,n.
\]
The {\em maximal abelian symmetry group} $\widehat{G}_f$ of $f$ is the abelian group defined by 
\[
\widehat{G}_f:={\rm Spec}(\CC L_f)(\CC).
\]
For a subgroup  $G_0 \subset G \subset G_f$, let $\widehat{G}$ be the subgroup of $\widehat{G}_f$ 
defined by the following commutative diagram of short exact sequences
\[ 
\xymatrix{
\{ 1 \}\ar[r] & G \ar[r]\ar@{^{(}->}[d]  & \widehat{G} \ar[r]\ar@{^{(}->}[d] &  \CC^\ast \ar[r]\ar@{=}[d]& \{ 1 \}\\
\{ 1 \}\ar[r] & G_f \ar[r] & \widehat{G}_f \ar[r] & \CC^\ast \ar[r]& \{ 1 \}
}.
\]
Define $L_{\widehat{G}}$ as the quotient of the abelian group $L_f$ such that $\widehat{G}={\rm Spec}(\CC L_{\widehat{G}})(\CC)$.

Let ${\rm HMF}^{L_{\widehat{G}}}(f)$ be the stable homotopy category of $L_{\widehat{G}}$-graded matrix factorisations of $f$. Let $K_0({\rm HMF}^{L_{\widehat{G}}}(f))$ be the Grothendieck K-group of the category which is equipped with the Euler form $\chi$. Let $N({\rm HMF}^{L_{\widehat{G}}}(f))$ be the numerical Grothendieck group which is obtained from $K_0({\rm HMF}^{L_{\widehat{G}}}(f))$ by dividing out the radical of the Euler form.

\begin{definition} We define
\[
\Lambda^{(B)}_{f,G} := N({\rm HMF}^{L_{\widehat{G}}}(f))
\]
equipped with the symmetric bilinear form $(-1)^{\frac{n(n-1)}{2}}(\chi + \chi^T)$.
\end{definition}

We now restrict to the case $n=3$.

\begin{theorem} \label{thm:=ranks}
For $n=3$, we have
\[ 
{\rm rank}\, \Lambda^{(B)}_{f,G} = {\rm rank}\, \Lambda^{(A)}_{\widetilde{f},\widetilde{G}}.
\]
\end{theorem}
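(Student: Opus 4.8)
The plan is to compute $\operatorname{rank}\Lambda^{(B)}_{f,G}$ and $\operatorname{rank}\Lambda^{(A)}_{\widetilde f,\widetilde G}$ separately and show they coincide. A homological mirror symmetry equivalence between $\mathrm{HMF}^{L_{\widehat G}}(f)$ and an $A$-model category realising $\Lambda^{(A)}_{\widetilde f,\widetilde G}$ would give the identity at once, but since no such equivalence is available in this generality I take the numerical route: Proposition~\ref{prop:facts}(ii) on the $A$-side, the structure theory of graded matrix-factorisation categories on the $B$-side, and the Berglund--H\"ubsch--Henningson symmetry (Theorem~\ref{thm:chiy} and the symmetry of orbifold E-functions of \cite{EGT} behind it) as the bridge.

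\emph{Step 1 (the $A$-side).} Since $G_0\subset G$, the dual group satisfies $\widetilde G\subset\mathrm{SL}(3;\CC)\cap G_{\widetilde f}$, so Proposition~\ref{prop:facts}(ii) applies verbatim to $(\widetilde f,\widetilde G)$:
\[
\operatorname{rank}\Lambda^{(A)}_{\widetilde f,\widetilde G}=\sum_{p+q=3}h^{p,q}(\widetilde f,\widetilde G).
\]
Because $\widetilde G\subset\mathrm{SL}(3;\CC)$ every age is an integer, so each twisted sector $(\H_{\widetilde{f}^{\,g}})^{\widetilde G}(-\mathrm{age}(g))$ of $\H_{\widetilde f,\widetilde G}$ is pure of weight $n_g+2\,\mathrm{age}(g)\in\{2,3,4\}$, the weights $2$ and $4$ occurring only for the age-one, resp. age-two, elements with $n_g=0$, which by \eqref{eq:2jG} and \eqref{eq:4jG} contribute $2j_{\widetilde G}$ dimensions. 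Hence
\[
\operatorname{rank}\Lambda^{(A)}_{\widetilde f,\widetilde G}
=\Bigl(\sum_{g\in\widetilde G}\dim_\CC\bigl(\H_{\widetilde{f}^{\,g}}\bigr)^{\widetilde G}\Bigr)-2j_{\widetilde G},
\]
i.e. the dimension of the $A$-model state space of $(\widetilde f,\widetilde G)$ corrected by $2j_{\widetilde G}$.

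\emph{Step 2 (the $B$-side).} The crux is a formula for $\operatorname{rank} N(\mathrm{HMF}^{L_{\widehat G}}(f))$. Passing to the numerical group only divides $K_0(\mathrm{HMF}^{L_{\widehat G}}(f))$ by the radical of the Euler form, so it suffices to control $\operatorname{rank} K_0$ and the corank of $\chi$. For the maximal grading $L_f$ (i.e. $G=G_f$, $\widehat G=\widehat G_f$) one knows $K_0(\mathrm{HMF}^{L_f}(f))$ is free of rank $\mu_f$, via a tilting object built from a monomial basis of the Milnor algebra of $f$; a coarser grading $L_{\widehat G}$ is a finite quotient of $L_f$, and descending to $\widehat G$-equivariant factorisations cuts the rank down. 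Concretely I would use an Orlov-type equivalence exhibiting $\mathrm{HMF}^{L_{\widehat G}}(f)$, up to a semiorthogonal piece generated by finitely many exceptional objects, as the bounded derived category of a Deligne--Mumford stack attached to $W_f^{\mathrm{red}}$ and $\widehat G$ (a stacky curve when $n=3$), compute its numerical $K$-group by Hirzebruch--Riemann--Roch, and use that the Hochschild homology of $\mathrm{HMF}^{L_{\widehat G}}(f)$ is an orbifold Jacobian ring. The target is
\[
\operatorname{rank} N(\mathrm{HMF}^{L_{\widehat G}}(f))=\dim_\CC\bigoplus_{g\in G}\bigl(\mathrm{Jac}(f^{g})\bigr)^{G}-2j_{\widetilde G},
\]
with $\mathrm{Jac}(f^{g}):=\CC$ when $n_g=0$ and the subtracted $2j_{\widetilde G}$ accounting for the split-off exceptional objects.

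\emph{Step 3 (matching; the main obstacle).} The two expressions agree once one knows the equality of state-space dimensions
\[
\sum_{g\in\widetilde G}\dim_\CC\bigl(\H_{\widetilde{f}^{\,g}}\bigr)^{\widetilde G}=\dim_\CC\bigoplus_{g\in G}\bigl(\mathrm{Jac}(f^{g})\bigr)^{G},
\]
the corrections $2j_{\widetilde G}$ being literally identical; this equality of the $A$-model state space of $(\widetilde f,\widetilde G)$ with the $B$-model state space of $(f,G)$ follows from the symmetry of orbifold E-functions of \cite{EGT} (of which Theorem~\ref{thm:chiy} is the $\chi_y$-shadow). The genuinely hard part is Step~2: there is no short evaluation of $\operatorname{rank} N(\mathrm{HMF}^{L_{\widehat G}}(f))$, and making the Orlov-type description of the orbifold Landau--Ginzburg model $(f,\widehat G)$ precise in three variables — above all, determining exactly which finitely many exceptional objects are split off, so that the correction is $2j_{\widetilde G}$ and cancels the one on the $A$-side — is where essentially all the work lies; the rest is bookkeeping with Proposition~\ref{prop:facts} and Theorem~\ref{thm:chiy}.
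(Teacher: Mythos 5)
Your Step 1 is exactly the paper's starting point (Proposition~\ref{prop:facts}(ii) applied to $(\widetilde f,\widetilde G)$, with the $2j_{\widetilde G}$ defect coming from the weight $2$ and $4$ pieces \eqref{eq:2jG}, \eqref{eq:4jG}), and you correctly identify Orlov's semi-orthogonal decomposition as the B-side tool. But the proposal does not close, and the gap is precisely your Step 2. The formula you target there, ${\rm rank}\,N({\rm HMF}^{L_{\widehat G}}(f))=\dim_\CC\bigoplus_{g\in G}({\rm Jac}(f^g))^G-2j_{\widetilde G}$, is (via the EGT symmetry you invoke in Step 3) \emph{equivalent} to the theorem itself, so asserting it is assuming what is to be proved; and the mechanism you sketch for it is not correct. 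Orlov's theorem gives ${\rm HMF}^{L_{\widehat G}}(f)\cong\langle D^b{\rm coh}\,\mathcal C_{(f,G_f)},\mathcal K(0),\dots,\mathcal K(-|G/G_0|\eps_{W_f}-1)\rangle$, so the split-off exceptional objects number $-|G/G_0|\eps_{W_f}$ and are \emph{added} to ${\rm rank}\,N(D^b{\rm coh}\,\mathcal C_{(f,G_f)})$; they are not a subtraction of $2j_{\widetilde G}$ from a state space, and the two counts are different numbers in general (e.g.\ for the $14$ exceptional unimodal singularities with $G=G_f=G_0$ one has $-|G/G_0|\eps_{W_f}=1$ while $j_{\widetilde G}=j_{\{{\rm id}\}}=0$). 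So the claim that ``the corrections are literally identical'' is unfounded: the two lattices differ from the respective state spaces by a priori different defects, and reconciling them is the actual content of the theorem.

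The paper's route avoids the E-function symmetry altogether. It computes $\sum_{p+q=3}h^{p,q}(\widetilde f,\widetilde G)$ explicitly in terms of the weights of $\widetilde f$, the orders $n_i$ of the coordinate-fixing subgroups $K_i$, and $j_{\widetilde G}$, eliminates $j_{\widetilde G}$ via $|\widetilde G|=1+2j_{\widetilde G}+\sum_i(n_i-1)$, and identifies the result by a case-by-case check with $2+\sum_i(\widetilde\gamma{}'_i/|\widetilde G/K_i|-1)\ast|K_i|+a_{W_{\widetilde f}}/|\widetilde G|$ in terms of Gabrielov numbers. The bridge to $(f,G)$ is then the strange duality theorem \cite[Theorem~7.1]{ET1} (Gabrielov numbers of $(\widetilde f,\widetilde G)$ equal Dolgachev numbers of $(f,G)$), after which the first summand is ${\rm rank}\,N(D^b{\rm coh}\,\mathcal C_{(f,G_f)})$ by \cite[Proposition~4.5]{ET1} and the Gorenstein term is the number of Orlov's exceptional objects. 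To salvage your plan you would need an independent proof of your Step 2 identity, which in effect means re-deriving this entire chain; as written, Steps 2 and 3 together are a restatement of the theorem rather than a proof of it.
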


\begin{proof} 
Since $G_0 \subset G$ we have $\widetilde{G} \subset {\rm SL}(3;\CC) \cap G_{\widetilde{f}}$. For $i=1,2,3$, let $K_i$ be the maximal subgroup of $\widetilde{G}$ fixing the $i$-th coordinate $x_i$, whose order $|K_i|$ is denoted by $n_i$. By \cite[Corollary~2]{ET3} we have
\begin{equation} \label{eq:|G|}
\left|\widetilde{G}\right|=1+2j_{\widetilde{G}}+\sum_{i=1}^3\left(n_i-1\right),
\end{equation}
where $j_{\widetilde{G}}$ is the number of elements  $g \in \widetilde{G}$ such that ${\rm age}(g)=1$ and $N_g=0$. For simplicity of notation, we denote the rational weights of $\widetilde{f}$ by $w_1, w_2,w_3$.

By Proposition~\ref{prop:facts} and the definition of the numbers $h^{p,q}(\widetilde{f},\widetilde{G})$ we have
\begin{eqnarray*}
{\rm rank}\, \Lambda^{(A)}_{\widetilde{f},\widetilde{G}}  & = & \sum_{p+q=3 \atop p,q \in \QQ} h^{p,q}(\widetilde{f},\widetilde{G})\\
 & = & \frac{1}{|\widetilde{G}|} \prod_{i=1}^3 \left( \frac{1}{w_i} -1\right) + \frac{1}{|\widetilde{G}|} \sum_{i=1}^3 \left( \frac{1}{w_i} -1\right)(n_i -1) - 2\frac{1}{|\widetilde{G}|}j_{\widetilde{G}} \\
 &  & {} + \frac{1}{|\widetilde{G}|} \sum_{i=1}^3 (n_i-1)\left( \frac{1}{w_i} -1\right)(n_i -1) - \frac{1}{|\widetilde{G}|}\sum_{i=1}^3 (n_i-1)(|\widetilde{G}|-n_i) 
\end{eqnarray*}
By Equation~(\ref{eq:|G|}) we obtain
\begin{eqnarray*}
\lefteqn{{\rm rank}\, \Lambda^{(A)}_{\widetilde{f},\widetilde{G}}} \\
 & = & \frac{1}{|\widetilde{G}|} \prod_{i=1}^3 \left( \frac{1}{w_i} -1\right) + \frac{1}{|\widetilde{G}|} \sum_{i=1}^3 \left( \frac{1}{w_i} -1\right)(n_i -1) - \frac{1}{|\widetilde{G}|}\left(|\widetilde{G}|-1-\sum_{i=1}^3(n_i-1) \right) \\
 & & {} + \frac{1}{|\widetilde{G}|} \sum_{i=1}^3 \left( \frac{1}{w_i}-1\right)(n_i-1)n_i - \sum_{i=1}^3(n_i-1) + \frac{1}{|\widetilde{G}|}\sum_{i=1}^3(n_i-1)n_i.
\end{eqnarray*}
This equation can be simplified to
\begin{eqnarray} 
\lefteqn{{\rm rank}\, \Lambda^{(A)}_{\widetilde{f},\widetilde{G}}} \nonumber \\
 & = & \frac{1}{|\widetilde{G}|} \prod_{i=1}^3 \left( \frac{1}{w_i} -1\right) + \frac{1}{|\widetilde{G}|} \sum_{i=1}^3 \frac{1}{w_i} (n_i^2-1) + \frac{1}{|\widetilde{G}|} +2-\sum_{i=1}^3 n_i \nonumber \\ 
 & = & \frac{1}{|\widetilde{G}|} \left( \frac{1}{w_1w_2w_3} - \frac{1}{w_2w_3} - \frac{1}{w_1w_3} - \frac{1}{w_1w_2} \right) +2+ \sum_{i=1}^3 \left( \frac{1}{w_i} \frac{n_i}{|\widetilde{G}|} - 1 \right)n_i \label{eq:rank}
\end{eqnarray} 
By a case-by-case verification using the classification of $\widetilde{f}$ (see \cite{ET0}) and the $\widetilde{G}$-action on $\widetilde{f}$, one can show that Equation~(\ref{eq:rank}) becomes
\begin{eqnarray*}
{\rm rank}\, \Lambda^{(A)}_{\widetilde{f},\widetilde{G}} & = & 2+ \sum_{i=1}^3 \left( \frac{\widetilde{\gamma'}_i}{|\widetilde{G}/K_i|} -1 \right) \ast |K_i| + \frac{1}{|\widetilde{G}|} a_{W_{\widetilde{f}}},
\end{eqnarray*}
where $\widetilde{\gamma}'_i$, $i=1,2,3$, are the Gabrielov numbers of the pair $(\widetilde{f}, \{ {\rm id} \})$ and by $u \ast v$,  for positive integers $u$ and $v$, we denote $v$ copies of the integer $u$. (Here we use $n_3=1$ for type~II and $n_2=n_3=1$ for the types III and IV.) 
By \cite[Theorem~7.1]{ET1} we obtain
\begin{eqnarray*}
{\rm rank}\, \Lambda^{(A)}_{\widetilde{f},\widetilde{G}} & = & 2+ \sum_{i=1}^3 \left( \frac{\alpha'_i}{|H_i/G|} -1 \right) \ast |H_i| + \frac{1}{|G_f/G|} a_{W_f},
\end{eqnarray*}
where $\alpha'_i$, $i=1,2,3$, are the Dolgachev numbers of the pair $(f,G_f)$ and $H_i$ is the minimal subgroup of $G_f$ containing $G$ and the isotropy group of the point $p_i$, $i=1,2,3$, of the quotient stack 
\[ 
\mathcal{C}_{(f,G_f)} = [f^{-1}(0) \setminus \{ 0 \}/ \widehat{G}_f ]
\]
(cf.\ \cite{ET1}). This quotient stack is a weighted projective line, see \cite[Proof of Theorem~3]{ET0}. 
Moreover,
\[
\frac{1}{|G_f/G|} a_{W_f}=  \frac{|G/G_0|}{|G_f/G_0|} a_{W_f} = - |G/G_0| \eps_{W_f},
\]
since $|G_f/G_0|=c_f$, and $|G/G_0|$
is the order of the kernel of the degree map $\deg : L_{\widehat{G}} \longrightarrow \ZZ$ (see \cite[Proof of Proposition~17]{ET0}).

By Orlov's semi-orthogonal decomposition theorem \cite[Theorem~2.5]{Or} for $L_{\widehat{G}}$, we have
\[ 
{\rm HMF}^{L_{\widehat{G}}}(f) \cong \langle  D^b {\rm coh}\,\mathcal{C}_{(f,G_f)}, \mathcal{K}(0), \ldots , \mathcal{K}(- |G/G_0| \eps_{W_f}-1) \rangle
\]
where $\mathcal{K}(i)$, $i=0, \ldots , - |G/G_0| \eps_{W_f}-1$, is the smallest full triangulated subcategory containing
the objects isomorphic to $\CC(\vec{l})$ with $\deg(\vec{l}) = i$ in the $L_{\widehat{G}}$-graded singularity category associated to $f$. Now 
\[ 
{\rm rank}\, N(D^b {\rm coh}\,\mathcal{C}_{(f, G_f)}) = 2+ \sum_{i=1}^3 \left( \frac{\alpha'_i}{|H_i/G|} -1 \right) \ast |H_i|,
\]
see \cite[Proposition~4.5]{ET1}. Therefore we obtain
\[
{\rm rank}\, \Lambda^{(A)}_{\widetilde{f}, \widetilde{G}} = {\rm rank}\, N(D^b {\rm coh}\mathcal{C}_{(f,G)}) - |G/G_0| \eps_{W_f} =
{\rm rank}\, \Lambda^{(B)}_{f,G}
\]
what had to be shown.
\end{proof}

Due to H.~Kajiura, Saito,  and the second author \cite{KST1, KST2}, one has the following results:
\begin{theorem}
\begin{itemize}
\item[{\rm (i)}] For $f$ defining an ADE singularity, $\Lambda^{(B)}_{f,G}$ is the root lattice of the corresponding type.
\item[{\rm (ii)}] For $f$ giving a regular system of weights with $\eps_{W_f}=-1$ and $G$ with $j_{\widetilde{G}}=0$, $\Lambda^{(B)}_{f,G}$ is described by a Coxeter--Dynkin diagram of the shape of Fig.\ref{FigT+}, where $\alpha_1, \ldots, \alpha_m$ are the Dolgachev numbers of the pair $(f,G)$.
\end{itemize}
\end{theorem}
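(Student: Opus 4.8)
The theorem is a restatement, in the notation of this paper, of the main results of \cite{KST1} (part~(i)) and \cite{KST2} (part~(ii)); I describe how it is obtained. In both parts the mechanism is the same: one produces inside $\mathrm{HMF}^{L_{\widehat{G}}}(f)$ a full exceptional collection $E_1,\dots,E_N$, equivalently a tilting object whose endomorphism algebra is the path algebra with relations of a suitable quiver $Q$. Since the collection generates, $K_0(\mathrm{HMF}^{L_{\widehat{G}}}(f))$ is the free $\ZZ$-module on $[E_1],\dots,[E_N]$, the Euler pairing $\chi$ is upper triangular with $1$'s on the diagonal and off-diagonal entries read off from the arrows and relations of $Q$, and $N(\mathrm{HMF}^{L_{\widehat{G}}}(f))$ is obtained by dividing out the radical of $\chi$. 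It then remains to identify the symmetrised form $(-1)^{\frac{n(n-1)}{2}}(\chi+\chi^{T})=-(\chi+\chi^{T})$ (recall $n=3$) with the bilinear form encoded by the Coxeter--Dynkin graph in question.

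\emph{Part~(i).} If $f$ defines an ADE singularity in three variables, then $G_0=G_f$, so $G=G_0=G_f$ and $L_{\widehat{G}}=L_f$ is the maximal grading. By \cite{KST1} there is a triangle equivalence $\mathrm{HMF}^{L_f}(f)\simeq D^{b}(\mathrm{mod}\,\CC\vec{\Delta})$, with $\vec{\Delta}$ a quiver whose underlying graph is the Dynkin diagram of the corresponding type, carrying a generating exceptional collection to the simple modules. On the module side $\chi=I-B$ with $B$ the arrow-incidence matrix, so $\chi$ is unimodular — hence non-degenerate, so $N=K_0$ — and $\chi+\chi^{T}=2I-A$ is the Cartan matrix of the Dynkin diagram. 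Therefore $\Lambda^{(B)}_{f,G}$, with its form $-(2I-A)$, is the root lattice of the corresponding type, in the negative definite normalisation for which it coincides with the Milnor lattice $H_2(V_{\widetilde f};\ZZ)$ — consistent with Proposition~\ref{prop:facts}(i), since $\widetilde f$ is again of the same ADE type and $\widetilde G=\{\mathrm{id}\}$.

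\emph{Part~(ii).} Let $f$ give a regular system of weights with $\eps_{W_f}=-1$ and $j_{\widetilde G}=0$. Applying Orlov's semi-orthogonal decomposition \cite[Theorem~2.5]{Or} for the grading $L_{\widehat{G}}$, exactly as in the proof of Theorem~\ref{thm:=ranks}, one obtains
\[
\mathrm{HMF}^{L_{\widehat{G}}}(f)\;\cong\;\langle\,D^{b}\mathrm{coh}\,\mathcal{C}_{(f,G_f)},\ \mathcal{K}(0),\dots,\mathcal{K}(-|G/G_0|\,\eps_{W_f}-1)\,\rangle,
\]
where $\mathcal{C}_{(f,G_f)}$ is a weighted projective line (see the proof of Theorem~\ref{thm:=ranks} and \cite{ET0}) and hence carries a full strong exceptional collection of line bundles. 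Splicing this collection with exceptional generators of the subcategories $\mathcal{K}(i)$ and performing the mutations of \cite{KST2}, one arrives at a full exceptional collection of $\mathrm{HMF}^{L_{\widehat{G}}}(f)$ whose symmetrised Euler matrix $\chi+\chi^{T}$ is the bilinear form attached to the graph $T^{+}_{\alpha_1,\dots,\alpha_m}$ of Fig.~\ref{FigT+}, the entries $\alpha_1,\dots,\alpha_m$ being the Dolgachev numbers of the pair $(f,G)$; these are recovered from the Dolgachev numbers of $(f,G_f)$ and the isotropy subgroups $H_i$ of Theorem~\ref{thm:=ranks} by the bookkeeping of \cite{ET0,ET1} that already appears in the rank computation there. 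In these cases $\chi$ has trivial radical (the singularities involved have $\mu_0=0$), so $\Lambda^{(B)}_{f,G}=N(\mathrm{HMF}^{L_{\widehat{G}}}(f))=K_0(\mathrm{HMF}^{L_{\widehat{G}}}(f))$ is freely spanned by the classes of the collection and is precisely the lattice described by the Coxeter--Dynkin diagram of Fig.~\ref{FigT+}; its rank agrees with the value found in Theorem~\ref{thm:=ranks}.

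\emph{Where the work lies.} The formal reductions above aside, the substantive content — the reason these are theorems of \cite{KST1,KST2} rather than formalities — is the explicit construction of the exceptional collection and the computation of its Euler matrix, which requires the classification of invertible polynomials and a case-by-case analysis of $\mathrm{HMF}^{L_{\widehat{G}}}(f)$. The delicate point is landing on the \emph{orbifold} Dolgachev numbers of $(f,G)$ rather than those of $(f,G_f)$: one must track the coarser grading $L_{\widehat{G}}$, a quotient of $L_f$ with $|\ker(\deg)|=|G/G_0|$, through the Orlov decomposition, which for the eight cases of Table~\ref{Tab8} with $G_0\subsetneq G_f$ is where the orders $|K_i|$ genuinely enter. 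I expect this matching, together with pinning down the overall sign and verifying that the radical is trivial (so that $N(\mathrm{HMF})$ carries the full rank predicted by the $T^{+}$-graph), to be the main obstacle; once the collection and its matrix are in hand, the passage to the symmetrised numerical lattice is routine linear algebra.
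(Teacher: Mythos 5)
The paper offers no proof of this theorem: it is stated verbatim as a result of \cite{KST1,KST2}, so your proposal --- which attributes part (i) to \cite{KST1} and part (ii) to \cite{KST2} and unpacks the exceptional-collection/tilting mechanism behind them, while honestly locating the real work in those papers' case-by-case constructions --- is essentially the paper's own argument with the citation expanded, and I have no objection to its overall structure. One caveat worth fixing: your justification of $L_{\widehat{G}}=L_f$ in part (i) via the claim that $G_0=G_f$ for ADE singularities is false in general (e.g.\ for $f=x^2+y^2+z^2$ one has $|G_f/G_0|=c_f=\gcd(4,4,4,8)=4$); the result of \cite{KST1} concerns the maximal grading $L_f$, i.e.\ $G=G_f$, and for intermediate groups $G_0\subset G\subsetneq G_f$ one would instead have to invoke the classification of ADE pairs $(f,G)$ from \cite{ET1} rather than this shortcut.
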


Together with Proposition~\ref{prop:8}, this gives rise to the following conjecture:
\begin{conjecture}
Let $f(x_1,x_2, x_3)$ be an invertible polynomial and $G_0 \subset G$. Then we have
\begin{itemize}
\item[{\rm (i)}] The signature of the lattice $\Lambda^{(B)}_{f,G}$ is ${\rm sign}(f,G)$.
\item[{\rm (ii)}] $\Lambda^{(B)}_{f,G} \simeq \Lambda^{(A)}_{\widetilde{f}, \widetilde{G}}$.
\end{itemize}
\end{conjecture}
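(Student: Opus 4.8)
\medskip
\noindent\textbf{Proof proposal.}
The plan is to derive part~(i) from part~(ii) and then concentrate on~(ii). For~(i): granting $\Lambda^{(B)}_{f,G}\simeq\Lambda^{(A)}_{\widetilde f,\widetilde G}$, note that $G_0\subset G$ forces $\widetilde G\subset{\rm SL}(3;\CC)\cap G_{\widetilde f}$, so Proposition~\ref{prop:facts}(iii) gives ${\rm sign}\,\Lambda^{(A)}_{\widetilde f,\widetilde G}={\rm sign}(\widetilde f,\widetilde G)$; applying the Corollary following Remark~\ref{rem:sign} to the pair $(\widetilde f,\widetilde G)$ and using $\widetilde{\widetilde f}=f$, $\widetilde{\widetilde G}=G$ yields ${\rm sign}(\widetilde f,\widetilde G)={\rm sign}(f,G)$, whence ${\rm sign}\,\Lambda^{(B)}_{f,G}={\rm sign}(f,G)$. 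In the cases of Table~\ref{Tab8} with $G=G_0$, together with the ADE and the $14$ exceptional unimodal cases, statement~(ii) is already within reach: by Proposition~\ref{prop:8} the lattice $\Lambda^{(A)}_{\widetilde f,\widetilde G}\otimes\QQ$ has a basis with Coxeter--Dynkin diagram $T^+_{\alpha_1,\dots,\alpha_m}$, where $\alpha_1,\dots,\alpha_m$ are the Dolgachev numbers of $(f,G)$, while by the theorem of Kajiura, Saito and the second author $\Lambda^{(B)}_{f,G}$ is given by the same diagram; in the ADE cases both sides are root lattices, and equality of types follows from the rank equality of Theorem~\ref{thm:=ranks} together with a comparison of discriminants. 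So the substance of the conjecture is the case of general invertible $f(x_1,x_2,x_3)$ and general $G$ with $G_0\subset G$.

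For the general case I would prove~(ii) by establishing a homological mirror symmetry equivalence and reading off its effect on K-theory. On the matrix-factorisation side use Orlov's semi-orthogonal decomposition, exactly as in the proof of Theorem~\ref{thm:=ranks},
\[
{\rm HMF}^{L_{\widehat G}}(f)\;\cong\;\bigl\langle\, D^b{\rm coh}\,\mathcal C_{(f,G_f)},\ \mathcal K(0),\dots,\mathcal K(-|G/G_0|\eps_{W_f}-1)\,\bigr\rangle;
\]
identify $D^b{\rm coh}\,\mathcal C_{(f,G_f)}$, the derived category of the orbifold weighted projective line $[f^{-1}(0)\setminus\{0\}/\widehat G_f]$, with the $\widetilde G$-equivariant directed Fukaya--Seidel category of $\widetilde f$ --- equivalently, via the crepant resolution $Y=\widetilde G\text{-}{\rm Hilb}(\CC^3)$ and the McKay correspondence, with the relevant piece of $H_2(Z;\ZZ)$ entering the construction of $\Lambda^{(A)}_{\widetilde f,\widetilde G}$; and match the exceptional objects $\mathcal K(i)$, i.e.\ the $\CC(\vec l)$ with $\deg(\vec l)=i$, with the linear-chain vanishing cycles $\overline{\delta}^3_1,\dots$ of Fig.~\ref{FigbarTpqr}. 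From the mirror of a full exceptional collection on $\mathcal C_{(f,G_f)}$ one then builds a distinguished basis of $\Lambda^{(A)}_{\widetilde f,\widetilde G}$ realising $T^+_{\alpha_1,\dots,\alpha_m}$ with the Dolgachev numbers of $(f,G)$, and the same diagram presents $\Lambda^{(B)}_{f,G}$. The crucial compatibility to verify along the way is that the equivalence carries the Euler form $\chi$ on $K_0({\rm HMF}^{L_{\widehat G}}(f))$ to the Seifert form of the corresponding cycles, so that the symmetrised form $(-1)^{\frac{n(n-1)}{2}}(\chi+\chi^T)=-(\chi+\chi^T)$ (here $n=3$) defining $\Lambda^{(B)}_{f,G}$ becomes the topological intersection form used to define $\Lambda^{(A)}$; this is precisely why the sign $(-1)^{\frac{n(n-1)}{2}}$ is built into the definition, and where $n$ odd is used.

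The main obstacle is establishing and controlling this equivalence uniformly in $\eps_{W_f}$: for $\eps_{W_f}=-1$ it is the theorem of Kajiura, Saito and the second author, and for $\eps_{W_f}\le 0$ the orbifold $\mathcal C_{(f,G_f)}$ is rational or elliptic and its mirror is well understood, but for $\eps_{W_f}>0$ the mirror partner is a Fukaya category of a higher-genus orbifold Riemann surface and the equivalence, though expected, is not available off the shelf; moreover one must check that it is compatible with the equivariant/orbifold structures so as to descend to the $(Y,Z)$ picture behind $\Lambda^{(A)}$, and one must control torsion, since Theorem~\ref{thm:=ranks} only compares ranks. A more modest fallback, sufficient for part~(i) alone, is to bypass~(ii) and compute ${\rm sign}\,\Lambda^{(B)}_{f,G}$ directly: assemble the Euler form on $N({\rm HMF}^{L_{\widehat G}}(f))$ from the semi-orthogonal decomposition, using the known form on $N(D^b{\rm coh}\,\mathcal C_{(f,G_f)})$ and the pairings of the exceptional objects $\mathcal K(i)$, and --- via a Hirzebruch--Riemann--Roch / equivariant-index computation --- match its signature term by term with the elliptic-genus formula defining ${\rm sign}(f,G)$; this requires only a careful bookkeeping of the Tate twists and the $\mathcal K(i)$-contributions and no Fukaya-categorical input.
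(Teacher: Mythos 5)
The statement you are proving is stated in the paper as a \emph{conjecture}: the paper offers no proof of it, only supporting evidence (Proposition~\ref{prop:8}, the theorem of Kajiura--Saito and the second author for $\eps_{W_f}=-1$, and the rank equality of Theorem~\ref{thm:=ranks}). Your proposal should therefore be judged on its own, and as it stands it is a research programme rather than a proof. The sound parts are the reduction of (i) to (ii) --- since $G_0\subset G$ gives $\widetilde G\subset{\rm SL}(3;\CC)\cap G_{\widetilde f}$, Proposition~\ref{prop:facts}(iii) and the duality corollary for the orbifoldized signature do yield ${\rm sign}\,\Lambda^{(A)}_{\widetilde f,\widetilde G}={\rm sign}(\widetilde f,\widetilde G)={\rm sign}(f,G)$ --- and the observation that the known cases are exactly those the paper cites as evidence.

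The genuine gap is that your central step, the equivalence between ${\rm HMF}^{L_{\widehat G}}(f)$ (via Orlov's decomposition) and the geometric $(Y,Z)$-side underlying $\Lambda^{(A)}_{\widetilde f,\widetilde G}$, carrying the Euler form to the Seifert form of vanishing cycles, is precisely the open content of homological mirror symmetry for these Landau--Ginzburg orbifolds; assuming it begs the question, and you yourself concede it is unavailable for $\eps_{W_f}>0$. Two further points would need repair even if that equivalence were granted: (a) Proposition~\ref{prop:8} and the Coxeter--Dynkin comparisons only produce bases of $\Lambda^{(A)}_{\widetilde f,\widetilde G}\otimes\QQ$, so even in the Table~\ref{Tab8} cases you get at best a rational isometry, not the integral lattice isomorphism claimed in (ii) (and Theorem~\ref{thm:=ranks} compares only ranks, so torsion and the integral structure are uncontrolled); (b) your ``fallback'' for (i) presupposes that the signature of the symmetrised Euler form on $N(D^b{\rm coh}\,\mathcal C_{(f,G_f)})$ and on the blocks $\mathcal K(i)$ can be matched to the elliptic-genus formula, which is again an unproved instance of the same mirror statement rather than a bookkeeping exercise. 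So the proposal correctly identifies where the difficulty lies but does not close it; this is consistent with the paper leaving the statement as a conjecture.
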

\begin{remark}
Gusein-Zade and the first author \cite{EG} also introduced an orbifold version of the Milnor lattice modeled on the so-called quantum homology group. 
It is invariant with respect to the orbifold monodromy operator. It is different from our lattices $\Lambda^{(A)}_{\widetilde{f}, \widetilde{G}}$ and $\Lambda^{(B)}_{\widetilde{f}, \widetilde{G}}$. However, in the examples \cite[Examples~1,2,3]{EG}, the invariants $(\mu_+, \mu_0, \mu_-)$ of this lattice agree with the invariants of Proposition~\ref{prop:facts}. (Note that there is a misprint in \cite[Example~2]{EG}: The last entry in the second line of the matrix of the Seifert form $L^{\widetilde{G}}$ must be 4 instead of 2.)
\end{remark}


\end{document}